\newcommand{\N}{\mathbb{N}}
\newcommand{\R}{\mathbb{R}}
\newtheorem{theorem}{Theorem}[section]
\newtheorem{lemma}[theorem]{Lemma}
\newtheorem{proposition}[theorem]{Proposition}
\newtheorem{corollary}[theorem]{Corollary}
\theoremstyle{remark}
\newtheorem{remark}{Remark}[section]
\theoremstyle{definition}
\newtheorem{definition}[theorem]{Definition}
\newtheorem*{merci}{Acknowledgements}
\numberwithin{equation}{section}
\begin{document}
\title[Orbital stability of ground state of fKdV]{Remarks on the orbital stability of ground state solutions of fKdV and related equations }
\author[F. Linares]{Felipe Linares}
\address{ IMPA\\ Estrada Dona Castorina 110\\ Rio de Janeiro 22460-320, RJ Brasil}
\email{ linares@impa.br}
\author[D. Pilod]{Didier Pilod}
\address{Instituto de Matem\' atica, Universidade Federal do Rio de Janeiro, Caixa Postal 68530 CEP 21941-97, Rio de Janeiro, RJ Brasil}
\email{didier@im.ufrj.br}
\author[J.-C. Saut]{Jean-Claude Saut}
\address{Laboratoire de Math\' ematiques, UMR 8628,\\
Universit\' e Paris-Sud et CNRS,\\ 91405 Orsay, France}
\email{jean-claude.saut@math.u-psud.fr}


\maketitle

\begin{abstract}
The aim of this paper is to provide a proof of the (conditional) orbital stability of solitary waves solutions to the fractional Korteweg- de Vries equation (fKdV) and to the fractional Benjamin-Bona-Mahony (fBBM) equation in the $L^2$ subcritical case. We also discuss instability and its possible scenarios.
\end{abstract}

\large
\section{Introduction}
This paper continues the study initiated in \cite{LPS2} of the fractional Korteweg-de Vries equation (fKdV)

\begin{equation}\label{fKdV}
 u_t+uu_x-D^\alpha u_x=0,\quad u(\cdot,0)=u_0 \, ,
\end{equation}
and of its Benjamin-Bona-Mahony counterpart (fBBM)
\begin{equation}\label{fBBM}
 u_t+u_x+uu_x+D^\alpha u_t=0 \, ,
\end{equation}
where  $D^\alpha=(-\partial_x^2)^{\frac{\alpha}2}$ and $0<\alpha <1$. $D^{\alpha}$ is defined via Fourier transform by
\begin{displaymath}
\big(D^{\alpha}f\big)^{\wedge}(\xi)=|\xi|^{\alpha}\widehat{f}(\xi) \, .
\end{displaymath}
  
The fKdV equation is a toy model to understand the interaction between nonlinearity and dispersion. The choice is here to fix the quadratic nonlinearity which appears \lq\lq generically\rq\rq \, in many physical contexts and to vary (lower) the dispersion (see \cite{LPS2, KS}). 
     
     Equations like \eqref{fKdV} but with an inhomogeneous symbol can be derived rigorously as water waves models (in the small amplitude, long wave regime) \cite {La,LS}. For instance the so-called Whitham equation \cite{W} is of fKdV type with a weak dispersion, that is
\begin{equation}\label{Whit}
u_t+uu_x+\int_{-\infty}^{\infty}k(x-y)u_x(y,t)dy=0 \, .
\end{equation}
This equation can also be written on the form
\begin{equation}\label{Whibis}
u_t+uu_x-Lu_x=0 \, ,
\end{equation}
where the Fourier multiplier operator $L$ is defined by 
$$\widehat{Lf}(\xi)=p(\xi)\hat{f}(\xi) \, ,$$
with $p=\hat{k}.$
In the original Whitham equation, the kernel $k$ was given by 
\begin{equation}\label{tanh}
k(x)=\frac{1}{2\pi}\int_\R \left( \frac{\tanh \xi}{\xi} \right)^{\frac12} e^{ix\xi} d\xi,
\end{equation}
that is $p(\xi)=\left( \frac{\tanh \xi}{\xi} \right)^{\frac12} $ which behaves like $|\xi|^{-\frac12}$ for large frequencies and like $1-\frac{\xi^2}{6}$ for small frequencies. 

When surface tension is included  the symbol $p$ above has to be changed to $p_S(\xi)=(1+\beta|\xi|^2)^{\frac12}\left( \frac{\tanh \xi}{\xi} \right)^{\frac12},$ where $\beta\geq 0$ measures the surface tension effects. This leads to the {\it extended Whitham equation} where the symbol $p_S(\xi)$ behaves as $\beta|\xi|^{\frac12}$ for large frequencies and as $1-(\frac{1}{6}-\beta)\xi^2$ for small frequencies.

     \vspace{0.3cm}
  The equation  \eqref{fKdV} is invariant under the scaling transformation 
\begin{displaymath} 
u_{\lambda} (x,t)=\lambda^{\alpha}u(\lambda x,\lambda^{\alpha+1}t),
\end{displaymath}
for any positive number $\lambda$. A straightforward computation shows that $\|u_{\lambda}\|_{\dot{H}^s}$ $=\lambda^{s+\alpha-\frac{1}{2}}\|u\|_{\dot{H}^s}$, in particular the value $\alpha =\frac{1}{2}$ corresponds to the $L^2$ critical case.

One associates to  \eqref {fKdV}, \eqref{fBBM}  the energy space $H^{\frac{\alpha}2}(\R), $ motivated by their conservation laws. The following quantities are formally conserved by the flow associated to \eqref{fKdV},
\begin{equation} \label{M}
M(u)=\frac{1}{2}\int_{\mathbb R}u^2(x,t)dx,
\end{equation}
and 
\begin{equation} \label{H}
E(u)=\int_{\mathbb R}\big( \frac{1}{2} |D^{\frac{\alpha}2}u(x,t)|^2-\frac{1}{6}u^3(x,t)\big) dx.
\end{equation}
Note that by the Sobolev embedding $H^{\frac{1}{6}}(\R)\hookrightarrow L^3(\R)$,  $H(u)$ is well-defined if and only if $\alpha \geq \frac{1}{3}, $ in other words $\alpha=\frac{1}{3}$ is the energy critical exponent.

On the other hand, there is no energy critical exponent $\alpha$ in the case of the fBBM equation \eqref{fBBM} since the momentum $$N(u)=\frac{1}{2}\int_{\R}(u^2+|D^{\frac{\alpha}{2}}u|^2) dx$$
makes obviously always sense for $u\in H^{\frac{\alpha}2}(\R)$. Another conserved quantity for \eqref{fBBM} is the Hamiltonian $$F(u)=\int_\R \left(\frac{u^2}{2}+\frac{u^3}{6}\right) \, ,$$
which makes sense when $u\in H^{\frac{\alpha}2}(\R), \alpha \geq \frac{1}{3}.$

 \vspace{0.3cm}
 There is apparently no published result on the orbital stability for solitary waves  of  fractional KdV equations (fKdV) \eqref{fKdV} or fractional BBM equations  \eqref {fBBM} in the range $0<\alpha<1.$ The known existence  proofs (see \cite{FL, FLS} and also  \cite{ABS})   use M.  Weinstein's argument, looking for the best constant in the fractional Gagliardo-Nirenberg inequality
 
 \begin{equation}\label{GN}
 \int_\R|u|^3dx\leq C\left(\int_\R |D^{\frac{\alpha}2} u|^2 dx\right)^{\frac1{2\alpha}}\left(\int_\R u^2 dx\right)^{\frac{3\alpha -1}{2\alpha}} \, .
 \end{equation}
This gives the existence in the energy sub-critical case $\alpha>\frac{1}{3}$, but of course not any kind of stability, which should be true only in the $L^2$ subcritical case, $\alpha>\frac{1}{2}.$
 
 Orbital stability issues for the  fractional Schr\"{o}dinger equations has been considered in \cite{CHHO}.
We will restrict to the  fKdV equation \eqref{fKdV} with {\it homogeneous } dispersion, (but the method extends obviously to the non homogeneous case).

\smallskip    
    The solitary waves are solutions of \eqref{fKdV} of the form $u(x,t)=Q_c(x-ct), c>0$ where $Q_c$ belongs to the energy space $H^{\frac{\alpha}2}(\R) $ and they should thus satisfy the equation
 \begin{equation}\label{sol}
D^{\alpha} Q_c+cQ_c-\frac{1}{2}Q_c^2=0 \, .
 \end{equation}
The energy identity
\begin{equation} \label{EnergyIdentity}
\int_\R |D^{\frac{\alpha}2}Q_c|^2dx+c\int_\R Q_c^2 dx-\frac{1}{2}\int_\R Q_c^3 dx=0 
\end{equation}
and the Pohojaev identity
\begin{equation} \label{PohojaevIdentity}
\frac{\alpha-1}{2}\int_\R |D^{\frac{\alpha}2}Q_c|^2dx-\frac{c}{2}\int_\R Q_c^2 dx+\frac{1}{6}\int_\R Q_c^3 dx=0
\end{equation}
which in turn is a consequence of the identity (see for instance Lemma 3 in \cite{KMR})
\begin{equation} \label{Poh}
\int_{\mathbb R}(D^{\alpha}\phi)x\phi'dx=\frac{\alpha-1}2\int_{\mathbb R}|D^{\frac{\alpha}2}\phi|^2dx,
\end{equation}
imply
\begin{equation}\label{nonex}
(3\alpha-1)\int_\R |D^{\frac\alpha2} Q_c|^2 dx-c\int_\R Q_c^2dx=0
\end{equation}
proving that no finite energy solitary waves exist in the energy subcritical case $\alpha>1/3$ when $c\leq 0$ (see \cite{LPS2}).

 \vspace{0.3cm}   
     J. Albert has considered in \cite{A} the case $\alpha\geq 1,$ for \eqref{fKdV} so we will focus on the case $1/2<\alpha<1,$ which is $L^2$ sub-critical for \eqref{fKdV}. In his notation, $s= \alpha/2.$ The proof in \cite{A} is inspired by an old idea of Boussinesq, revisited by Benjamin \cite{B} (and by Cazenave-Lions \cite{CL}  for NLS type equations) and consists in using the concentration-compactness method of P.-L. Lions to prove the existence of a minimizer of the Hamiltonian (energy) with fixed momentum ($L^2$ norm). The proof gives nearly for free the orbital stability of the set of minimizers, assuming that the corresponding Cauchy problem is globally well-posed in the energy space, at least for initial data close to a solitary wave (a fact which is conjectured  but still unproved in the case of fKdV when $1/2<\alpha <1.)$
    
    Uniqueness and positivity properties of a class of solitary waves (ground states) have been investigated in \cite{FQT, F, FL,FLS} among others. 
    We recall that existence of solitary waves of arbitrary positive velocities has been established in the {\it energy subcritical case}, that is when $\alpha >\frac{1}{3}$ while no localized solitary waves exist when $0<\alpha <\frac{1}{3}$ (see the argument above), that is in the energy supercritical case. It is worth noticing that existence of solitary waves for the original Whitham equation has been established in \cite{EGW} by exploiting that the dispersion approaches that of the KdV equation in the long wave limit.

On the other hand, numerical simulations (\cite{KS}) suggest that the Cauchy problem for \eqref{fKdV} is globally well-posed for $\alpha >\frac{1}{2},$ a typical solution decomposing into solitary waves plus radiation, which would give a positive answer to the {\it soliton resolution conjecture} (\cite{Tao}). One aim of this note is to provide a (small) step towards this conjecture, namely to prove that the solitary waves are orbitally stable for this range of $\alpha$'s.\footnote{Actually we prove a {\it conditional} stability result since we do not know that the solutions of the Cauchy problem are global in this case.}

    \vspace{0.3cm}
    
The paper is organized as follows. In the following section we consider the fKdV equation. The next section deals with the fBBM equation. Lastly we initiate an extension to fractional Kadomtsev-Petviashvili I (fKPI) equations.

\vspace{0.3cm}
\noindent{\bf Notations.} We will denote $|\cdot|_p$ the norm in the Lebesgue space $L^p(\R),\; 1\leq p\leq \infty$ and $\|\cdot\|_s$ the norm in the Sobolev space $H^s(\R),\; s\in \R.$ We will denote $\hat {f}$ or $\mathcal F(f)$ the Fourier transform of a tempered distribution $f.$ For any $s\in \R,$ we define $D^s f$ by its Fourier transform $\widehat{D^s f}(\xi)=|\xi|^s \hat{f}(\xi).$

\section{The fKdV}
 We will  follow closely the strategy  of \cite{A}, which was used to prove the orbital stability of the KdV solitary waves and the (conditional) orbital stability for \eqref{fKdV} in the case $\alpha \geq1$ and related equations.  We just indicate the differences. We will assume in this section that $\frac12<\alpha<1$. We recall that
$$E(u)=\frac{1}{2}\int_\R[|D^{\frac{\alpha}2}u|^2-\frac{1}{3}u^3]dx \quad\text{and}\quad M(u)=\frac{1}{2}\int_\R u^2 dx \, .$$
 
 
 For $q>0$ fixed, we set
 
 \begin{equation}\label{min}
  I_q=\inf_{u\in H^{\frac{\alpha}2}(\R)}\lbrace E(u) \ : \ M(u)=q\rbrace. 
 \end{equation}
 
 We will denote by $G_q$ the set (possibly empty) of minimizers.

\begin{lemma}\label{inf}
For any $q>0$ one has $-\infty<I_q<0.$
\end{lemma}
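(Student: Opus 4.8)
The plan is to prove the two bounds separately, as they require quite different ideas. The lower bound $I_q > -\infty$ is essentially a coercivity statement and follows from the fractional Gagliardo--Nirenberg inequality \eqref{GN}. The upper bound $I_q < 0$ is a matter of exhibiting a single test function on which $E$ is negative while the momentum constraint is satisfied, and here the subcriticality $\alpha > \tfrac12$ is what makes the construction work.

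For the lower bound, I would fix $u \in H^{\frac{\alpha}2}(\R)$ with $M(u) = q$, so that $\int_\R u^2 = 2q$ is controlled, and estimate the cubic term using \eqref{GN}. Writing $X = \int_\R |D^{\frac{\alpha}2}u|^2$, inequality \eqref{GN} gives
\begin{equation*}
\int_\R |u|^3 \, dx \leq C \, X^{\frac{1}{2\alpha}} \, (2q)^{\frac{3\alpha-1}{2\alpha}},
\end{equation*}
so that
\begin{equation*}
E(u) \geq \frac{1}{2} X - \frac{C}{6} (2q)^{\frac{3\alpha-1}{2\alpha}} X^{\frac{1}{2\alpha}}.
\end{equation*}
Since $\frac12 < \alpha < 1$ the exponent $\frac{1}{2\alpha}$ satisfies $\frac12 < \frac{1}{2\alpha} < 1$, so the subtracted term grows strictly slower than linearly in $X$; the right-hand side, viewed as a function of $X \geq 0$, is therefore bounded below (it tends to $+\infty$ as $X \to \infty$ and is continuous on $[0,\infty)$). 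This yields a finite lower bound on $E(u)$ depending only on $q$, hence $I_q > -\infty$. The $L^2$-subcriticality is exactly the condition $\frac{1}{2\alpha} < 1$ that makes this work.

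For the upper bound, I would scale a fixed profile. Take any nonzero $\varphi \in H^{\frac{\alpha}2}(\R)$ with $\int_\R \varphi^3 > 0$ (for instance a suitable positive bump), and consider the two-parameter family $u_{\lambda,\mu}(x) = \mu \, \varphi(\lambda x)$. One computes
\begin{equation*}
M(u_{\lambda,\mu}) = \frac{\mu^2}{2\lambda} \int_\R \varphi^2, \qquad
E(u_{\lambda,\mu}) = \frac{\mu^2 \lambda^{\alpha-1}}{2} \int_\R |D^{\frac{\alpha}2}\varphi|^2 - \frac{\mu^3}{6\lambda} \int_\R \varphi^3.
\end{equation*}
The idea is to first impose the constraint $M(u_{\lambda,\mu}) = q$, which fixes $\mu^2 = 2q\lambda / \int_\R \varphi^2$, and then send $\lambda \to 0^+$. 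Substituting, the kinetic term scales like $\lambda^{\alpha}$ while the cubic term scales like $\mu^3/\lambda \sim \lambda^{3/2}/\lambda = \lambda^{1/2}$; since $\alpha > \tfrac12$, the cubic (negative) term dominates as $\lambda \to 0^+$, forcing $E(u_{\lambda,\mu}) < 0$ for $\lambda$ small enough while the constraint remains satisfied. This proves $I_q < 0$.

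The main obstacle is really just making sure the scaling exponents are compared correctly and that the chosen profile gives a strictly positive cubic integral; both bounds are then routine. The conceptual point worth emphasizing is that the single hypothesis $\frac12 < \alpha < 1$ drives both halves: $\frac{1}{2\alpha} < 1$ gives coercivity (lower bound), while $\alpha > \frac12$ makes the concentrating family lower the energy below zero under the fixed-momentum constraint (upper bound). I would present the lower bound first since it is the one that genuinely uses \eqref{GN}, then dispatch the upper bound by the explicit scaling computation.
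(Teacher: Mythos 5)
Your proof is correct and follows essentially the same route as the paper's: the lower bound rests on the Gagliardo--Nirenberg control of the cubic term by a sublinear power of the kinetic energy (the paper writes the same estimate via interpolation of inhomogeneous Sobolev norms followed by Young's inequality, which is exactly your observation that $\tfrac12 X - cX^{1/(2\alpha)}$ is bounded below when $\tfrac{1}{2\alpha}<1$), and the negativity $I_q<0$ is obtained by the same two-parameter scaling argument that the paper delegates to Lemma 3.4 of \cite{A}. No gaps.
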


\begin{proof}
By Sobolev and a standard interpolation inequality, one has for any $\epsilon >0$ and $v \in H^{\frac{\alpha}2}(\mathbb R)$ such that $M(v)=q$,
$$\left|\int_\R v^3dx\right|\leq \|v\|_{\frac16}^3\leq C\|v\|_0^{\frac{3\alpha-1}\alpha}\|v\|_{\frac\alpha2}^{\frac1\alpha}\leq\epsilon\|v\|_{\frac\alpha2}^2+C_\epsilon \|v\|_0^{\frac{2(3\alpha-1)}{2\alpha-1}} \, .$$
Now we write as in the proof of Lemma 3.4 in \cite{A}
\begin{equation}
\begin{split}
E(v)&=E(v)+M(v)-M(v)\\
       &=\frac{1}{2}\int_\R [|D^{\frac{\alpha}2}v|^2+v^2]dx-\frac{1}{6}\int_\R v^3dx-M(v)\\
       &\geq\left(\frac{1}{2}-\frac{\epsilon}{6}\right) \|v\|^2_{\frac{\alpha}2}-q-C'_\epsilon q^{(3\alpha-1)/(2\alpha-1)}\\
       &\geq -q-C'_\epsilon q^{(3\alpha-1)/(2\alpha-1)}>-\infty.
\end{split}
\end{equation}
The fact that $I_q<0$ is easily checked by scaling as in the proof of Lemma 3.4 in \cite{A}.
\end{proof}

So $I_q$ exists and is finite, and the concentration-compactness method is used to prove that it is achieved. A first step is to prove that the minimizing sequences are bounded.

\begin{lemma}\label{bound}
If $\{v_n\}$ is a minimizing sequence for $I_q$ then there exist positive constants $C$ and $\delta$ such that

1. $\|v_n\|_{\frac{\alpha}2}\leq C$ for all $n$ and

2. $|v_n|_3\geq \delta$ for all $n$ sufficiently large.
\end{lemma}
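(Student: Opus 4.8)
The plan is to extract both bounds directly from the lower estimate established in Lemma \ref{inf}, using only that $E(v_n)\to I_q$, that $M(v_n)=q$ is fixed along the sequence, and that $I_q<0$.

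For the first assertion I would reuse the chain of inequalities from the proof of Lemma \ref{inf}. Writing $E(v)=\frac{1}{2}\|v\|_{\frac{\alpha}2}^2-M(v)-\frac{1}{6}\int_\R v^3\,dx$ and invoking the interpolation bound $\left|\int_\R v^3\,dx\right|\le\epsilon\|v\|_{\frac{\alpha}2}^2+C_\epsilon\|v\|_0^{2(3\alpha-1)/(2\alpha-1)}$, one obtains for every $v$ with $M(v)=q$ the inequality
\begin{equation*}
\Big(\frac{1}{2}-\frac{\epsilon}{6}\Big)\|v\|_{\frac{\alpha}2}^2\le E(v)+q+C'_\epsilon\,q^{(3\alpha-1)/(2\alpha-1)},
\end{equation*}
where we used $\|v\|_0^2=2q$. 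Choosing $\epsilon$ small (for instance $\epsilon=1$) makes the coefficient on the left strictly positive; since $E(v_n)\to I_q$ the quantities $E(v_n)$ are bounded above, so the right-hand side is bounded uniformly in $n$. This yields $\|v_n\|_{\frac{\alpha}2}\le C$ for all $n$.

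For the second assertion the decisive input is the strict negativity $I_q<0$ furnished by Lemma \ref{inf}. Starting from $E(v_n)=\frac{1}{2}\|D^{\frac{\alpha}2}v_n\|_0^2-\frac{1}{6}\int_\R v_n^3\,dx$ and discarding the nonnegative first term, I would write $\frac{1}{6}\int_\R v_n^3\,dx\ge -E(v_n)$. Since $E(v_n)\to I_q<0$, for all $n$ sufficiently large we have $E(v_n)\le\frac{1}{2}I_q<0$, whence $\int_\R v_n^3\,dx\ge -3I_q>0$. Bounding $\int_\R v_n^3\,dx\le\int_\R|v_n|^3\,dx=|v_n|_3^3$ then gives $|v_n|_3\ge(-3I_q)^{1/3}=:\delta>0$.

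I do not expect a serious obstacle here: both statements are essentially bookkeeping consequences of Lemma \ref{inf}. The only points requiring care are tracking signs and the interpolation constants in the first part, and recognizing in the second part that it is precisely the sign $I_q<0$ which forces the cubic term to stay bounded away from zero. This lower bound on $|v_n|_3$ is exactly the non-vanishing information needed later to exclude the \lq\lq vanishing\rq\rq\ alternative in the concentration-compactness dichotomy.
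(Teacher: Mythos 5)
Your proof is correct and follows essentially the same route as the paper: part 1 is the same absorption argument via the interpolation bound from Lemma \ref{inf}, and part 2 rests on the same observation that $I_q<0$ together with the nonnegativity of $\frac12\int|D^{\frac{\alpha}{2}}v_n|^2$ forces $\int v_n^3$ to stay bounded away from zero. The only (cosmetic) difference is that you argue directly and extract the explicit constant $\delta=(-3I_q)^{1/3}$, whereas the paper phrases part 2 as a contradiction.
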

\begin{proof}
Let $\{v_n\}$ be a minimizing sequence for $I_q.$ Firstly, one has by a previous estimate
$$\frac{1}{2}\|v_n\|^2_{\frac{\alpha}2}=E(v_n)+\frac{1}{2}\int_\R v_n^2 dx+\frac{1}{6}\int_\R v_n^3 dx\leq |E(v_n)|+\epsilon\|v_n\|^2_{\frac{\alpha}2}+ C(q) \, ,$$
proving 1. 

In order to prove 2, we argue by contradiction, assuming that for any $k\in\N$ there exists a subsequence $v_{n_k}$ such that $|v_{n_k}|_3\leq 1/k, \forall k.$ This implies
$$I_q=\lim_{k\to \infty}\left(\frac{1}{2}\int_\R |D^{\frac{\alpha}2}v_{n_k}|^2-\frac{1}{6}\int_\R v_{n_k}^3 dx\right)\geq - \lim_{k\to \infty}\frac{1}{6}\int_\R v_{n_k}^3 dx=0 \, ,$$
in contradiction with Lemma \ref{inf}.
\end{proof}

The next step is to prove the sub-additivity of $I_q,$ (see \cite{PLL1,PLL2}). 
\begin{lemma}\label {subadd}
For all $q_1,q_2>0,$ one has
$$ I_{q_1+q_2}<I_{q_1}+I_{q_2} \, .$$
\end{lemma}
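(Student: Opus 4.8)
The plan is to reduce the strict subadditivity to the scaling inequality
\[ I_{\theta q}<\theta\, I_q\qquad\text{for all }\theta>1\text{ and }q>0. \]
Granting this, fix $q_1,q_2>0$ and apply it once with $(q,\theta)=\big(q_1,\tfrac{q_1+q_2}{q_1}\big)$ and once with $(q,\theta)=\big(q_2,\tfrac{q_1+q_2}{q_2}\big)$, both exponents being $>1$. This gives $\tfrac{q_1}{q_1+q_2}I_{q_1+q_2}<I_{q_1}$ and $\tfrac{q_2}{q_1+q_2}I_{q_1+q_2}<I_{q_2}$, and summing the two inequalities yields exactly $I_{q_1+q_2}<I_{q_1}+I_{q_2}$.

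To establish the scaling inequality I would use the dilation $w=\sqrt{\theta}\,v$, chosen precisely so that $M(w)=\theta\,M(v)$. A direct computation gives
\[ E(w)=\frac{\theta}{2}\int_\R|D^{\frac\alpha2}v|^2\,dx-\frac{\theta^{3/2}}{6}\int_\R v^3\,dx=\theta\, E(v)-\frac{\theta(\sqrt\theta-1)}{6}\int_\R v^3\,dx. \]
The point is that for a suitably chosen near-minimizer the cubic term is not merely positive but bounded below independently of $v$. Fix $\theta>1$ and $\epsilon$ with $0<\epsilon<(-I_q)\big(1-\theta^{-1/2}\big)$, and choose $v\in H^{\frac\alpha2}(\R)$ with $M(v)=q$ and $E(v)<I_q+\epsilon$; such a $v$ exists because $I_q<0$ by Lemma \ref{inf}. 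Since $\epsilon<-I_q$ we have $E(v)<0$, so the identity $\int_\R v^3\,dx=3\int_\R|D^{\frac\alpha2}v|^2\,dx-6E(v)$ forces
\[ \int_\R v^3\,dx\ \geq\ -6E(v)\ >\ -6\,(I_q+\epsilon)\ >\ 0. \]

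Inserting this lower bound into the expression for $E(w)$ (the coefficient $-\tfrac{\theta(\sqrt\theta-1)}{6}$ being negative) and using $I_{\theta q}\leq E(w)$, which is legitimate since $M(w)=\theta q$, one finds after simplification
\[ I_{\theta q}\ \leq\ E(w)\ <\ \theta^{3/2}\big(I_q+\epsilon\big)\ <\ \theta\, I_q, \]
where the last inequality is precisely equivalent to the choice $\epsilon<(-I_q)\big(1-\theta^{-1/2}\big)$. This proves the scaling inequality and hence the lemma.

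I expect the only genuine obstacle to be the strictness. The pointwise bound $E(\sqrt\theta\,v)<\theta E(v)$ alone does not survive passage to the infimum over $v$, so one cannot simply take infima on both sides. What rescues the argument is the uniform positive lower bound on $\int_\R v^3\,dx$, available precisely because $I_q<0$ (Lemma \ref{inf}); the algebraic identity relating $\int_\R v^3\,dx$, $\int_\R|D^{\frac\alpha2}v|^2\,dx$ and $E(v)$ is exactly what converts the sign condition $E(v)<0$ into that quantitative bound, thereby upgrading the pointwise strict inequality to one with a gap that is uniform along the minimizing family.
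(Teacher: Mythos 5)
Your proof is correct, but it takes a genuinely different route from the paper's. The paper exploits the exact homogeneity of the problem: the anisotropic dilation $v_\theta(x)=\theta^{\alpha/(2\alpha-1)}v(\theta^{1/(2\alpha-1)}x)$ scales \emph{both} terms of $E$ by the same factor $\theta^{(3\alpha-1)/(2\alpha-1)}$ while scaling $M$ by $\theta$, so one gets the exact identity $I_{\theta q}=\theta^{(3\alpha-1)/(2\alpha-1)}I_q$; strict subadditivity then follows from $I_1<0$ and the strict superadditivity of $t\mapsto t^p$ for $p=(3\alpha-1)/(2\alpha-1)>1$. You instead use the pure amplitude dilation $w=\sqrt{\theta}\,v$, under which the two terms of $E$ scale differently, and you correctly identify that the resulting pointwise strict inequality $E(\sqrt{\theta}\,v)<\theta E(v)$ does not by itself survive the infimum; your fix --- converting $E(v)<0$ into the quantitative bound $\int v^3\geq -6E(v)>0$ via the identity $\int v^3=3\int|D^{\alpha/2}v|^2-6E(v)$, then choosing $\epsilon<(-I_q)(1-\theta^{-1/2})$ --- closes that gap cleanly, and the reduction of subadditivity to $I_{\theta q}<\theta I_q$ by summing the two weighted inequalities is the standard Lions argument. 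What each approach buys: the paper's argument is shorter and gives the sharp power law for $I_q$, but it relies essentially on the homogeneity of the symbol $|\xi|^\alpha$; yours never rescales the spatial variable, so it would carry over verbatim to inhomogeneous dispersions such as the Whitham symbol discussed in the introduction, at the cost of needing both conclusions of Lemma \ref{inf} ($I_q$ finite \emph{and} strictly negative) as quantitative input.
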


\begin{proof}
 As in Lemma 2.4 in \cite{A} the proof follows from a homogeneity argument which we give by sake of completeness. For all $\theta >0$ and $q>0$ we claim that

\begin{equation}\label{claim}
I_{\theta q}=\theta^{(3\alpha-1)/(2\alpha-1)} I_q.
\end{equation}
  
To prove the claim, we set for any function $v\in H^{\frac\alpha2}(\R),$
$$v_\theta(x)=\theta^{\alpha/(2\alpha-1)} v(\theta^{1/(2\alpha -1)}x) \, .$$
Then
$$M(v_\theta)=\theta M(v) , $$
and
$$E(v_\theta)=\theta^{(3\alpha-1)/(2\alpha -1)}E(v) \, .$$
Hence 
\begin{equation}
\begin{split}
I_{\theta  q}&=\inf \lbrace  E(v_\theta) : M(v_\theta)=\theta  q\rbrace\\
&=\inf \lbrace  E(v_\theta) : M(v)= q\rbrace\\
&=\inf \lbrace \theta^{(3\alpha-1)/(2\alpha-1)} E(v) : M(v)=q \rbrace\\
&=\theta^{(3\alpha-1)/(2\alpha-1)} I_q \, .
\end{split}
\end{equation}

It follows then (by choosing $q=1$ and $\theta=q_1+q_2$ in \eqref{claim})  that
\begin{displaymath}
\begin{split}
I_{q_1+q_2}&=(q_1+q_2)^{(3\alpha-1)/(2\alpha-1)}I_1 \\
&<\left(q_1^{(3\alpha-1)/(2\alpha-1)}+q_2^{(3\alpha-1)/(2\alpha-1)}\right)I_1=I_{q_1}+I_{q_2} \, .
\end{split}
\end{displaymath}
  
  
  
  

  










\end{proof}

As usual in the concentration compactness method, we associate to any minimizing sequence $\{v_n\}$ the sequence of nondecreasing functions
 $\mathfrak M_n: \lbrack 0, \infty) \to \lbrack 0,q\rbrack$ defined by 
$$\mathfrak M_n(r)=\sup_{y\in \R}\int_{y-r}^{y+r} |v_n|^2 dx.$$
By an elementary argument, $\lbrace \mathfrak M_n\rbrace$ has a subsequence, still denoted by $\lbrace \mathfrak M_n\rbrace$, which converges uniformly on compact sets to a nondecreasing function $\mathfrak M : \lbrack 0, \infty) \to \lbrack 0,q\rbrack.$ Let
 
 $$\lambda=\lim_{r\to \infty} \mathfrak M(r),\quad \text{so that}\quad 0\leq \lambda\leq q.$$
 
 We will examine successively the three (mutually exclusive) possibilities, $\lambda =q$ (compactness), $\lambda =0$ (vanishing), $0<\lambda<q$ (dichotomy).
 
The compactness case is the good one in virtue of the following lemma. 
\begin{lemma}\label{compact}
 Assume that $\lambda=q.$ Then there exists a sequence of real numbers $\{y_n\}_{n\in \N}$ such that
 
 1. For every $z<q$ there exists $r=r(z)$ such that
 $$\int_{y_n-r}^{y_n+r} |v_n|^2>z$$
 for all sufficiently large $n.$
 
 2. The sequence $\lbrace\tilde{v}_n\rbrace$ defined by
 $$\tilde{v}_n(x)=v_n(x+y_n)\quad \text{for all}\quad x\in \R$$
has a subsequence which converges in $H^{\frac\alpha2}(\R)$ to a function $g\in G_q.$ In particular, $G_q$ is not empty.
 \end{lemma}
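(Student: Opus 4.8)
The plan is to run the compactness branch of the concentration--compactness scheme, using only that $\{v_n\}$ is bounded in $H^{\frac\alpha2}(\R)$ (Lemma~\ref{bound}), the Gagliardo--Nirenberg inequality \eqref{GN}, and the compactness of the embedding $H^{\frac\alpha2}(\R)\hookrightarrow L^2_{\mathrm{loc}}(\R)$.

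For assertion 1, I start from $\mathfrak M(r)\to\lambda=q$ as $r\to\infty$. Given $z<q$, choose $R$ with $\mathfrak M(R)>z$; since $\mathfrak M_n\to\mathfrak M$ uniformly on compacts, $\mathfrak M_n(R)>z$ for large $n$, and by definition of the supremum there is $y_n^z$ with $\int_{y_n^z-R}^{y_n^z+R}|v_n|^2>z$. The only real point is to replace these $z$-dependent centers by a single sequence $\{y_n\}$. I fix $z_0\in(\tfrac q2,q)$, obtain a radius $R_0$ and centers $y_n$ with $\int_{y_n-R_0}^{y_n+R_0}|v_n|^2>z_0$, and claim these $y_n$ work for every $z$. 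Indeed, for $z_0<z<q$ let $R_z\ge R_0$ and $y_n'$ be as above for $z$; both intervals $[y_n-R_0,y_n+R_0]$ and $[y_n'-R_z,y_n'+R_z]$ carry more than half of the total mass $q$, so they cannot be disjoint, whence $|y_n-y_n'|\le R_0+R_z$ and therefore $\int_{y_n-r}^{y_n+r}|v_n|^2>z$ with $r:=R_0+2R_z$. For $z\le z_0$ the claim is immediate with $r=R_0$. This proves~1.

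For assertion 2, set $\tilde v_n(x)=v_n(x+y_n)$; since $E$, $M$ and $\|\cdot\|_{\frac\alpha2}$ are translation invariant, $\{\tilde v_n\}$ is again a bounded minimizing sequence and assertion~1 becomes uniform concentration at the origin: for every $z<q$ one has $\int_{-r(z)}^{r(z)}|\tilde v_n|^2>z$ for $n$ large. Passing to a subsequence, $\tilde v_n\rightharpoonup g$ weakly in $H^{\frac\alpha2}(\R)$, and by Rellich's theorem (with a diagonal extraction) $\tilde v_n\to g$ strongly in $L^2$ of every bounded interval. The concentration then gives $\int_{-r(z)}^{r(z)}|g|^2\ge z$ for all $z<q$, so $M(g)\ge q$, while weak lower semicontinuity and the constraint give $M(g)\le\liminf M(\tilde v_n)=q$; hence $M(g)=q$. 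Splitting $\|\tilde v_n-g\|_0^2$ into the part over $[-r,r]$ (which tends to $0$ by Rellich) and the tail (uniformly small by the concentration for $\tilde v_n$ and by $g\in L^2$), I obtain $\tilde v_n\to g$ strongly in $L^2(\R)$.

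Finally, applying \eqref{GN} to $\tilde v_n-g$, which is bounded in $H^{\frac\alpha2}(\R)$ and tends to $0$ in $L^2$, yields $|\tilde v_n-g|_3\to0$, so $\int_\R \tilde v_n^3\to\int_\R g^3$; combined with the weak lower semicontinuity of $\int_\R|D^{\frac\alpha2}\cdot|^2$ this gives $E(g)\le\liminf E(\tilde v_n)=I_q$, and since $M(g)=q$ we conclude $E(g)=I_q$, i.e.\ $g\in G_q$. This equality forces $\int_\R|D^{\frac\alpha2}\tilde v_n|^2\to\int_\R|D^{\frac\alpha2}g|^2$, so $\|\tilde v_n\|_{\frac\alpha2}\to\|g\|_{\frac\alpha2}$; weak convergence together with convergence of norms in the Hilbert space $H^{\frac\alpha2}(\R)$ yields strong convergence $\tilde v_n\to g$, completing the proof. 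I expect the main obstacle to be assertion~1, namely the selection of a single center sequence that captures the mass simultaneously for all thresholds (the overlap argument above), together with the passage from local $L^2$ compactness to strong $L^2(\R)$ convergence with no loss of mass; once $M(g)=q$ and $E(g)=I_q$ are in hand, the upgrade to strong $H^{\frac\alpha2}$ convergence is automatic.
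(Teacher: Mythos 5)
Your argument is correct and is essentially the classical compactness-case proof that the paper invokes by reference (Lemma~2.5 of \cite{A}, with $H^1(\R)$ replaced by $H^{\frac{\alpha}2}(\R)$): the overlap argument for a single center sequence, Rellich plus no-loss-of-mass to get strong $L^2(\R)$ convergence, the Gagliardo--Nirenberg inequality \eqref{GN} to pass to the limit in the cubic term, and weak convergence plus norm convergence to upgrade to strong $H^{\frac{\alpha}2}$ convergence. The only quibble is a harmless factor-of-two bookkeeping issue between $\int v_n^2=2q$ and the paper's stated range $[0,q]$ of $\mathfrak M_n$, which is already present in the paper itself.
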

  
 \begin{proof}
 The proof is classical and follows exactly that of Lemma 2.5 in \cite{A}, replacing $H^1(\R)$ by $H^{\frac{\alpha}2}(\R).$ 
 \end{proof}
 
 The next technical lemma will be use to prove that vanishing does not occur.
\begin{lemma}\label{tech}
 Suppose that $B>0$ and $\delta >0$ are given. Then there exists $\eta=\eta(B,\delta)$ such that if $v\in H^{\frac\alpha2}(\R), \alpha >1/3,$ with $\|v\|_{\frac\alpha2}\leq B$ and $|v|_3\geq \delta,$ then
$$\sup_{y\in\R}\int_{y-2}^{y+2}|v(x)|^3dx \geq \eta.$$
\end{lemma}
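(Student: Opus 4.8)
The plan is to prove the quantitative contrapositive: I will bound the global quantity $|v|_3$ from above by a positive power of $P(v):=\sup_{y\in\R}\int_{y-2}^{y+2}|v|^3\,dx$, with a constant depending only on $\|v\|_{\frac\alpha2}$. Once a bound of the form $|v|_3\le C(B)\,P(v)^{\kappa}$ with $\kappa>0$ is available, the hypotheses $\|v\|_{\frac\alpha2}\le B$ and $|v|_3\ge\delta$ immediately force $P(v)\ge\bigl(\delta/C(B)\bigr)^{1/\kappa}=:\eta(B,\delta)>0$. This is the standard Lions vanishing lemma, here transplanted to the nonlocal space $H^{\frac\alpha2}(\R)$.

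First I would cover $\R$ by the intervals $Q_j=[2j-2,2j+2]$, $j\in\Z$: they all have length $4$, are translates of a single interval, and overlap with multiplicity at most $2$, so that $\sum_j\int_{Q_j}w\le 2\int_\R w$ for any $w\ge0$, while $\int_{Q_j}|v|^2\le S(v):=\sup_y\int_{y-2}^{y+2}|v|^2\,dx$ for every $j$. Next I fix an auxiliary exponent $q_0$ with $\max(3,2+2\alpha)\le q_0<\frac{2}{1-\alpha}$; such a choice exists exactly because $\alpha>\frac13$, and $q_0$ is subcritical for the embedding $H^{\frac\alpha2}(\R)\hookrightarrow L^{q_0}(\R)$. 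On each $Q_j$ the fractional Gagliardo--Nirenberg inequality (with the Gagliardo seminorm, and a constant independent of $j$ by translation invariance) gives
$$\int_{Q_j}|v|^{q_0}\,dx\le C\,\Bigl(\int_{Q_j}|v|^2\,dx\Bigr)^{(1-\lambda)q_0/2}\,\|v\|_{H^{\frac\alpha2}(Q_j)}^{\lambda q_0},\qquad \lambda=\frac{q_0-2}{\alpha q_0}.$$
The purpose of the choice $q_0\ge 2+2\alpha$ is that then $\lambda q_0=\frac{q_0-2}{\alpha}\ge2$, so that $\|v\|_{H^{\frac\alpha2}(Q_j)}^{\lambda q_0}\le B^{\lambda q_0-2}\,\|v\|_{H^{\frac\alpha2}(Q_j)}^{2}$, since each local norm is dominated by $\|v\|_{\frac\alpha2}\le B$.

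Then I would sum over $j$. Bounding $\bigl(\int_{Q_j}|v|^2\bigr)^{(1-\lambda)q_0/2}\le S(v)^{(1-\lambda)q_0/2}$ and using the additivity of the fractional norm under finite overlap, namely $\sum_j\|v\|_{H^{\frac\alpha2}(Q_j)}^2\le C\,\|v\|_{\frac\alpha2}^2$, I obtain
$$|v|_{q_0}^{q_0}\le C\,B^{\lambda q_0-2}\,S(v)^{(1-\lambda)q_0/2}\,\|v\|_{\frac\alpha2}^2\le C'(B)\,S(v)^{(1-\lambda)q_0/2}.$$
Interpolating $L^3$ between $L^2$ and $L^{q_0}$ (trivial when $q_0=3$), and using $|v|_2\le\|v\|_{\frac\alpha2}\le B$, this yields a bound of the form $|v|_3\le C''(B)\,S(v)^{a}$ with $a=(1-\lambda)\mu/2>0$, where $\mu\in(0,1)$ is the interpolation weight. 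Finally, Hölder's inequality on each interval of length $4$ gives $\int_{y-2}^{y+2}|v|^2\le 4^{1/3}\bigl(\int_{y-2}^{y+2}|v|^3\bigr)^{2/3}$, hence $S(v)\le 4^{1/3}P(v)^{2/3}$; substituting produces $|v|_3\le C(B)\,P(v)^{\kappa}$ with $\kappa=2a/3>0$, from which $\eta(B,\delta)$ is read off.

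The main obstacle is the summation $\sum_j\|v\|_{H^{\frac\alpha2}(Q_j)}^2\le C\|v\|_{\frac\alpha2}^2$: unlike for $H^1$, the norm of $H^{\frac\alpha2}$ is nonlocal, so this is not a mere restriction-and-add. I would handle it through the Sobolev--Slobodeckij (Gagliardo) characterization, valid since $0<\frac\alpha2<1$,
$$\|v\|_{H^{\frac\alpha2}(Q_j)}^2=\int_{Q_j}|v|^2\,dx+\int_{Q_j}\int_{Q_j}\frac{|v(x)-v(y)|^2}{|x-y|^{1+\alpha}}\,dx\,dy.$$
Summing, the $L^2$ part contributes at most $2|v|_2^2$, while each pair $(x,y)$ with $|x-y|\le 4$ lies in at most two of the products $Q_j\times Q_j$ and pairs with $|x-y|>4$ are simply discarded; hence the double integrals sum to at most $2\int_\R\int_\R\frac{|v(x)-v(y)|^2}{|x-y|^{1+\alpha}}\,dx\,dy=C\,\|v\|_{\dot H^{\frac\alpha2}}^2$. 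This controlled overlap is what replaces the elementary localization available in the local setting and makes the whole scheme go through.
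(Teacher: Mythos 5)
Your argument is correct, and it is worth noting that the paper does not actually prove this lemma: it simply defers to Lemmas 3.7--3.9 of Albert--Bona--Saut and Lemma 3.3 of Albert, which establish the analogous vanishing-type statement in the nonlocal setting. Your proof is therefore a genuine, self-contained alternative. The overall scheme (cover $\R$ by translates of a fixed interval, apply a local embedding on each piece, sum using finite overlap, and convert the resulting bound $|v|_3\le C(B)\,P(v)^{\kappa}$ into the contrapositive) is the standard Lions vanishing argument that those references also follow; the point where the fractional case genuinely differs from $H^1$ is the summation $\sum_j\|v\|_{H^{\alpha/2}(Q_j)}^2\lesssim \|v\|_{\alpha/2}^2$, and you identify and resolve it correctly via the Gagliardo--Slobodeckij characterization with bounded overlap of the products $Q_j\times Q_j$. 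This is arguably cleaner than the commutator-with-cutoff localization used in the cited references, at the mild cost of invoking the equivalence of the Fourier and Slobodeckij norms and a uniform extension/interpolation inequality on intervals of length $4$ (both standard for $0<\tfrac{\alpha}{2}<\tfrac12$). Your bookkeeping of exponents is sound: the window $\max(3,2+2\alpha)\le q_0<\tfrac{2}{1-\alpha}$ is nonempty precisely when $\alpha>\tfrac13$, the condition $q_0\ge 2+2\alpha$ guarantees $\lambda q_0\ge 2$ so that the excess power of the local norm can be absorbed into $B$, and the strict inequality $q_0<\tfrac{2}{1-\alpha}$ gives $\lambda<1$, hence a strictly positive power of $S(v)$. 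The only slips are cosmetic: the intervals $[2j-2,2j+2]$ have covering multiplicity at most $3$ (not $2$) since a point may lie in three consecutive $Q_j$ when the endpoints are integers, and likewise a pair $(x,y)$ may lie in three of the products $Q_j\times Q_j$; this only changes the constants and does not affect the conclusion.
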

  
  \begin{proof}
  The proof follows exactly that of  Lemmas 3.7, 3.8, 3.9 in \cite{ABS} (see also Lemma 3.3 in \cite{A}) in the case $\alpha =1.$  \end{proof}
  
  
  
  
  
  
  
  

  

  


 
 The following key lemma shows that dichotomy occurs when $0<\lambda<q.$
 \begin{lemma}\label{dicho}
 We still consider a minimizing sequence $\{v_n\}.$  Then for every $\epsilon>0$ there exist $N\in \N$ and sequences $\lbrace g_N, g_{N+1},...\rbrace$ and $\lbrace h_N, h_{N+1},...\rbrace$ of functions in $H^{\frac{\alpha}2}(\R)$  such that for every $n\geq N,$ 
 
 1. $|M(g_n)-\lambda|<\epsilon$
 
 2.  $|M(h_n)-(q-\lambda)|<\epsilon$
 
 3. $E(v_n)\geq E(g_n)+E(h_n)-\epsilon.$
 \end{lemma}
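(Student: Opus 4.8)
The plan is to carry out the dichotomy step of Lions' method exactly as in \cite{A}, the only genuinely new point being the splitting of the \emph{kinetic} part of the energy, where the nonlocality of $D^{\frac\alpha2}$ must be controlled. The key preliminary observation I would record is the singular-integral representation, valid since $0<\frac\alpha2<1$,
\begin{equation*}
\int_\R |D^{\frac\alpha2}f|^2\,dx=c_\alpha\iint_{\R^2}\frac{|f(x)-f(y)|^2}{|x-y|^{1+\alpha}}\,dx\,dy,
\end{equation*}
which converts the delicate localization of $D^{\frac\alpha2}$ into a pointwise computation.

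Next I would fix the geometry. Given $\epsilon>0$, pick $\epsilon'>0$ small. Since $\mathfrak M$ is nondecreasing with $\mathfrak M(r)\to\lambda$, choose $\rho>0$ with $\mathfrak M(\rho)>\lambda-\epsilon'$, and then $\rho'>\rho$ so large that $\mathfrak M(\rho')<\lambda+\epsilon'$ and that $L:=\rho'-\rho$ satisfies $CL^{-\alpha}\|v_n\|_0^2<\epsilon$ (possible since $\|v_n\|_0^2=2q$ is fixed). Using the uniform convergence $\mathfrak M_n\to\mathfrak M$ on $[0,\rho']$, for $n\geq N$ choose $y_n$ nearly attaining the supremum defining $\mathfrak M_n(\rho)$; this makes the core mass $\int_{|x-y_n|\leq\rho}|v_n|^2$ within $O(\epsilon')$ of $\lambda$, while $\int_{|x-y_n|\leq\rho'}|v_n|^2\leq\mathfrak M_n(\rho')$ stays within $O(\epsilon')$ of $\lambda$, so the annulus mass $\int_{\rho\leq|x-y_n|\leq\rho'}|v_n|^2$ is $O(\epsilon')$. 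I would then take smooth cutoffs $\phi(x)=\cos\theta_n(x)$, $\psi(x)=\sin\theta_n(x)$, where $\theta_n$ increases from $0$ on $\{|x-y_n|\leq\rho\}$ to $\tfrac\pi2$ on $\{|x-y_n|\geq\rho'\}$, so that $\phi^2+\psi^2\equiv1$ and $\|\phi'\|_\infty+\|\psi'\|_\infty\lesssim L^{-1}$, and set $g_n=\phi v_n$, $h_n=\psi v_n$. Since $\phi\equiv1,\psi\equiv0$ on the core and $\phi\equiv0,\psi\equiv1$ on the tail, items \emph{1} and \emph{2} follow from $M(g_n)+M(h_n)=M(v_n)=q$ and the annulus estimate once $\epsilon'<\epsilon$.

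It then remains to prove \emph{3}. For the cubic part, $\int v_n^3-\int g_n^3-\int h_n^3=\int(1-\phi^3-\psi^3)v_n^3$ vanishes off the annulus and is bounded there by $C\int_{\rho\leq|x-y_n|\leq\rho'}|v_n|^3$; cutting $v_n$ by a fixed multiplier supported near the annulus and applying the interpolation inequality of Lemma \ref{inf} (this is where $\alpha>\tfrac13$ enters) gives a bound $\lesssim(\text{annulus mass})^{(3\alpha-1)/(2\alpha)}\|v_n\|_{\frac\alpha2}^{1/\alpha}=O\big(\epsilon'^{(3\alpha-1)/(2\alpha)}\big)$, using $\|v_n\|_{\frac\alpha2}\leq C$ from Lemma \ref{bound}. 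For the kinetic part, the identity $\phi^2+\psi^2=1$ and the elementary pointwise computation
\begin{equation*}
|\phi(x)v(x)-\phi(y)v(y)|^2+|\psi(x)v(x)-\psi(y)v(y)|^2-|v(x)-v(y)|^2=\big[(\phi(x)-\phi(y))^2+(\psi(x)-\psi(y))^2\big]v(x)v(y)
\end{equation*}
yield, via the representation above, the exact error formula
\begin{equation*}
\int|D^{\frac\alpha2}g_n|^2+\int|D^{\frac\alpha2}h_n|^2-\int|D^{\frac\alpha2}v_n|^2=c_\alpha\iint\frac{(\phi(x)-\phi(y))^2+(\psi(x)-\psi(y))^2}{|x-y|^{1+\alpha}}\,v_n(x)v_n(y)\,dx\,dy.
\end{equation*}
Using $(\phi(x)-\phi(y))^2\leq\min(4,\|\phi'\|_\infty^2|x-y|^2)$ and integrating the kernel gives $\sup_x\int\frac{(\phi(x)-\phi(y))^2+(\psi(x)-\psi(y))^2}{|x-y|^{1+\alpha}}\,dy\lesssim L^{-\alpha}$ uniformly in $x$, so the Schur test bounds the right-hand side by $CL^{-\alpha}\|v_n\|_0^2<\epsilon$. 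Combining the cubic and kinetic estimates gives $E(g_n)+E(h_n)\leq E(v_n)+\epsilon$, which is \emph{3}.

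The main obstacle is precisely this kinetic splitting: unlike the local case, $D^{\frac\alpha2}(\phi v_n)$ is not supported where $v_n$ is, and a naive decomposition loses control through long-range interactions. The resolution is that the constraint $\phi^2+\psi^2=1$ produces the clean \emph{nonnegative} error kernel above, whose mass decays like $L^{-\alpha}$; taking the transition width $L$ large — which is compatible with keeping the annulus mass small, since $\mathfrak M(\rho')\to\lambda$ lets one push $\rho'\to\infty$ while $\rho$ is already fixed — makes the error arbitrarily small. The only bookkeeping point I would verify carefully is the $L^2$-normalization in the definition of $\mathfrak M_n$ relative to $M$, so that the target values $\lambda$ and $q-\lambda$ in \emph{1}--\emph{2} are correctly scaled; the kernel constants are visibly uniform in $n$ by translation invariance.
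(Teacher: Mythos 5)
Your argument is correct, and it reaches the same conclusion as the paper by a genuinely different route at the one point the paper itself flags as delicate, namely the splitting of the nonlocal kinetic term. The paper keeps the Fourier-side definition of $D^{\alpha}$, writes $\int |D^{\frac{\alpha}2}(\phi_r v_n)|^2=\int \phi_r^2 v_n D^{\alpha}v_n+\int \phi_r v_n[D^{\alpha},\phi_r]v_n$, and controls the commutator in $L^2$ by the Kenig--Ponce--Vega fractional Leibniz rule together with the scaling $|D^{\alpha}\phi_r|_4=r^{\frac14-\alpha}|D^{\alpha}\phi|_4$, yielding an error $\mathcal{O}(r^{\frac14-\alpha})$ (plus the $L^4$ bound on $v_n$ from $H^{\frac{\alpha}2}\hookrightarrow L^4$). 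You instead pass to the Gagliardo--Slobodeckij representation of $\int|D^{\frac{\alpha}2}f|^2$ and exploit the exact pointwise identity forced by $\phi^2+\psi^2\equiv 1$ (which I checked: both sides equal $2v(x)v(y)\bigl(1-\phi(x)\phi(y)-\psi(x)\psi(y)\bigr)$), so the kinetic error is a nonnegative symmetric kernel whose row sums are $\mathcal{O}(L^{-\alpha})$, and the Schur test finishes it. What your approach buys: it is self-contained (no external commutator/Leibniz estimate), it gives an exact error formula rather than an upper bound on a commutator, and it works for the whole range $0<\alpha<2$ without the restrictions $\alpha>\frac14$ (needed for $r^{\frac14-\alpha}\to 0$) and $\alpha\ge\frac12$ (needed for the $L^4$ embedding) that the paper's estimate uses; only the cubic term still requires $\alpha>\frac13$. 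What the paper's route buys is uniformity of method with \cite{A} and reusability of the commutator bound elsewhere. Your remaining steps (choice of $y_n$, annulus mass, items 1--2, and the cubic term via the interpolation inequality of Lemma \ref{inf} applied to a cutoff of $v_n$ near the annulus) match the paper's, and the factor-of-two normalization issue between $M$ and $\mathfrak M_n$ that you flag is indeed present in the paper's own statement and is harmless.
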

 
 \begin{proof}
 
 Statements 1 and 2 are pretty general and a proof can be found for instance in that of Lemma 2.6 in \cite{A} (see also a sketch of the proof below). Statement 3 is more delicate because of the non locality of $D^\alpha.$
 
 


To prove 3, we follow closely the proof of Lemmas 2.6 and 3.8 in \cite{A}. Let $\phi\in C_0^\infty\lbrack -2,2\rbrack$ be such that $\phi\equiv 1$ on $\lbrack -1,1\rbrack,$ and let $\psi\in C^\infty(\R)$ be such that $\phi^2+\psi^2\equiv 1$ on $\R.$ For each $r\in \R,$ define $\phi_r(x)=\phi(x/r)$ and $\psi_r(x)=\psi(x/r).$ Coming back to the definition of $\mathfrak M$, $\epsilon >0$ being fixed,  for all sufficiently large values of $r$ one has 
$$\lambda-\epsilon<\mathfrak M(r)\leq \mathfrak M(2r)\leq \lambda \, .$$
Such a value of $r$ being fixed, one can choose $N$ so large that
$$\lambda-\epsilon<\mathfrak M_n(r)\leq \mathfrak M_n(2r)\leq \lambda+\epsilon$$
for all $n\geq N.$ Hence for each $n\geq N,$ one can find $y_n$ such that
\begin{equation}\label{eq1}
\int_{y_n-r}^{y_n+r} |v_n|^2 dx >\lambda -\epsilon
\end{equation} 
and
\begin{equation}\label{eq2}
\int_{y_n-2r}^{y_n+2r} |v_n|^2 dx <\lambda +\epsilon
\end{equation}

Define $g_n(x)=\phi_r(x-y_n)v_n(x)$ and $h_n(x)=\psi_r(x-y_n)v_n(x).$ Clearly $g_n$ and $h_n$ satisfy statements 1 and 2.


We now write
\begin{equation}
\begin{split}
E(g_n)+E(h_n)&=\frac{1}{2}\left\lbrack \int\phi_r^2v_nD^\alpha v_n dx+\int\phi_rv_n\lbrack D^\alpha, \phi_r\rbrack v_n dx \right\rbrack\\
&+\frac{1}{2}\left\lbrack \int\psi_r^2v_nD^\alpha v_n dx+\int\psi_rv_n\lbrack D^\alpha, \psi_r\rbrack v_n dx \right\rbrack\\
&-\frac{1}{6}\int \phi_r^2v_n^3 dx -\frac{1}{6}\int \psi_r^2v_n^3 dx\\
&+\frac{1}{6}\int (\phi_r^2-\phi_r^3)v_n^3 dx+\frac{1}{6}\int (\psi_r^2-\psi_r^3)v_n^3 dx\\
&=E(v_n)+\int\phi_rv_n\lbrack D^\alpha, \phi_r\rbrack v_n dx \int\psi_rv_n\lbrack D^\alpha, \psi_r\rbrack v_n dx\\
&+\frac{1}{6}\int (\phi_r^2-\phi_r^3)v_n^3 dx+\frac{1}{6}\int (\psi_r^2-\psi_r^3)v_n^3 dx \, ,
\end{split}
\end{equation}
where we have used that $\phi^2+\psi^2\equiv 1.$ 

As in \cite{A} we want to prove that the sum of the two commutators is $O(1/r^\beta)$ for some $\beta >0$ and that the sum of the two other terms is $O(\epsilon).$ For the later this is exactly as in \cite{A}. For the commutator, since in his case $\alpha =1$, Albert uses that $|\lbrack |D|,\theta\rbrack f|_2\leq C|\theta '|_\infty |f|_2$ and this is fine since $|\phi'_r|_\infty=1/r|\phi'|_\infty.$ 
 
For $\alpha<1,$ we will use instead the fractional Leibniz rule of Kenig, Ponce and Vega (\textit{cf} Theorem A.8 and A.12 in the appendix of \cite{KPV}).
\begin{lemma}[Fractional Leibniz Rule]
Let $0<\alpha<1$, $1<p, \, p_1, \, p_2<+\infty$ and $\alpha_1, \, \alpha_2 \in [0,\alpha]$ be such that 
$\frac1p=\frac1{p_1}+\frac1{p_2}$ and $\alpha=\alpha_1+\alpha_2$. Then 
\begin{equation} \label{LR.1}
\big| D^{\alpha}(fg)-fD^{\alpha}g-gD^{\alpha}f\big|_p \lesssim |D^{\alpha_1}f|_{p_1}|D^{\alpha_2}g|_{p_2} \, .
\end{equation}
Moreover if $\alpha_1=0$, then $p_1=+\infty$ is allowed. 
\end{lemma}


First, we estimate $|[D^{\alpha},\phi_r]v_n |_2$.
Observe that
\begin{equation} \label{eq1}
|[D^{\alpha},\phi_r]v_n |_2 \le \big| D^{\alpha}(\phi_r v_n)-\phi_rD^{\alpha}v_n-v_nD^{\alpha}\phi_r\big|_2+|v_nD^{\alpha}\phi_r|_2 
\end{equation}
 Thus, by using \eqref{LR.1} with $f=v_n$, $g=\phi_r$, $p=2$, $p_1=p_2=4$ and $\alpha_2=\alpha$, $\alpha_1=0$, we get that
\begin{equation} \label{eq2}
|[D^{\alpha},\phi_r]v_n |_2 
\lesssim |v_n|_4 |D^{\alpha}(\phi_r)|_4 \, .
\end{equation}

On the one hand due to the Sobolev embedding $H^{\frac14}(\mathbb R) \hookrightarrow L^4(\mathbb R)$ and the fact that $\{v_n\}$ is bounded in $H^{\frac{\alpha}2}(\mathbb R)$ with $\frac{\alpha}2 >\frac14$, there exists $C>0$ such that 
\begin{equation} \label{eq3}
|v_n|_4 \le C \, .
\end{equation}
On the other hand, a direct computation yields 
\begin{equation} \label{eq4}
|D^{\alpha}(\phi_r)|_4 =r^{\frac14-\alpha}|D^{\alpha}\phi|_4=\mathcal{O}(r^{\frac14-\alpha}) \, ,
\end{equation}
since $\phi \in C_0^{\infty}(\mathbb R) \subset \mathcal{S}(\mathbb R)$.
Thus, we conclude gathering \eqref{eq2}--\eqref{eq4} that 
\begin{equation} \label{eq5}
|[D^{\alpha},\phi_r]v_n |_2 =\mathcal{O}(r^{\frac14-\alpha}) \, ,
\end{equation}
which is fine since $\alpha>\frac12$.

We use the same strategy to estimate $|[D^{\alpha},\psi_r]v_n |_2$. From the definition of $\psi$, we have  $\phi^2+\psi^2=1$, so that we can write 
$$\psi=1-\chi \quad \text{where} \quad \chi=1-\sqrt{1-\phi^2} \in C_0^{\infty}(\mathbb R) \subset \mathcal{S}(\mathbb R). $$
Moreover, it holds that $\big(D^{\alpha}(1)\big)^{\wedge}(\xi)=c|\xi|^{\alpha}\delta_0=0$ in $\mathcal{S}'(\mathbb R)$ . Then
\begin{displaymath}
|D^{\alpha}(\psi_r)|_4=|D^{\alpha}(\chi_r)|_4 =r^{\frac14-\alpha}|D^{\alpha}\chi|_4=\mathcal{O}(r^{\frac14-\alpha}) \, .
\end{displaymath}
Therefore, we conclude arguing as above that
\begin{equation} \label{eq5}
|[D^{\alpha},\psi_r]v_n |_2 =\mathcal{O}(r^{\frac14-\alpha}) \, .
\end{equation}

Finally we have established that 
$$E(g_n)+E(h_n)=E(v_n)+\mathcal {O}(r^{\frac14-\alpha})+\mathcal {O}(\epsilon),$$
which achieves the proof of 3.
\end{proof}
 
 As in \cite{A} Corollary 2.7, one deduces from Lemma \ref{dicho}
 \begin{corollary}\label{cor1}
 If $0<\lambda <q,$ then
 $$I_q\geq I_\lambda+I_{q-\lambda}.$$
\end{corollary}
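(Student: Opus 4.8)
The plan is to pass to the limit $\epsilon \to 0$ in Lemma \ref{dicho}. Fix a minimizing sequence $\{v_n\}$ and suppose $0<\lambda<q$. For each $\epsilon>0$, Lemma \ref{dicho} furnishes an integer $N$ and sequences $\{g_n\}$, $\{h_n\}$ satisfying statements 1, 2, 3. The idea is that statement 3 bounds $E(v_n)$ below by $E(g_n)+E(h_n)-\epsilon$, while statements 1 and 2 say that $M(g_n)$ is close to $\lambda$ and $M(h_n)$ is close to $q-\lambda$; since $I_q$ is, by definition \eqref{min}, the infimum of $E$ over the constraint surface $\{M=q\}$, I would like to compare $E(g_n)$ with $I_{M(g_n)}$ and $E(h_n)$ with $I_{M(h_n)}$, and then use continuity of $q\mapsto I_q$ to replace $I_{M(g_n)}$, $I_{M(h_n)}$ by $I_\lambda$, $I_{q-\lambda}$.

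Concretely, first I would note that $g_n$ (respectively $h_n$) is an admissible competitor for the minimization problem defining $I_{M(g_n)}$ (respectively $I_{M(h_n)}$), so $E(g_n)\ge I_{M(g_n)}$ and $E(h_n)\ge I_{M(h_n)}$. Combining with statement 3,
\begin{equation}
E(v_n)\ge I_{M(g_n)}+I_{M(h_n)}-\epsilon .
\end{equation}
Next I would invoke the homogeneity relation \eqref{claim}, namely $I_{\theta q}=\theta^{(3\alpha-1)/(2\alpha-1)}I_q$, which shows in particular that $q\mapsto I_q$ is continuous (it is a constant $I_1$ times a power of $q$). Since by statements 1 and 2 we have $M(g_n)\to\lambda$-close and $M(h_n)\to(q-\lambda)$-close to within $\epsilon$, continuity of $I$ gives, for $\epsilon$ small, $I_{M(g_n)}\ge I_\lambda-\omega(\epsilon)$ and $I_{M(h_n)}\ge I_{q-\lambda}-\omega(\epsilon)$ for some modulus $\omega$ with $\omega(\epsilon)\to 0$. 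Therefore
\begin{equation}
E(v_n)\ge I_\lambda+I_{q-\lambda}-\epsilon-2\omega(\epsilon)
\end{equation}
for all $n\ge N$. Taking the liminf in $n$ (the left side tends to $I_q$ since $\{v_n\}$ is minimizing) yields $I_q\ge I_\lambda+I_{q-\lambda}-\epsilon-2\omega(\epsilon)$, and letting $\epsilon\to 0$ gives $I_q\ge I_\lambda+I_{q-\lambda}$.

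The main obstacle is handling the continuity argument cleanly, i.e. controlling $I_{M(g_n)}-I_\lambda$ and $I_{M(h_n)}-I_{q-\lambda}$ uniformly. The cleanest route is to exploit \eqref{claim} directly: writing $M(g_n)=\lambda+\delta_n$ with $|\delta_n|<\epsilon$, one has $I_{M(g_n)}=I_\lambda\cdot\big(M(g_n)/\lambda\big)^{(3\alpha-1)/(2\alpha-1)}$, and since $\lambda>0$ is fixed and the exponent is finite, the multiplicative factor tends to $1$ as $\epsilon\to 0$, so the perturbation is genuinely $O(\epsilon)$ uniformly in $n$ (recall $I_\lambda<0$ by Lemma \ref{inf}, so a factor slightly below $1$ only decreases $|I_\lambda|$). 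The same applies to $h_n$ provided $q-\lambda>0$, which holds since we are in the dichotomy regime $0<\lambda<q$. With these estimates the inequality $E(v_n)\ge I_\lambda+I_{q-\lambda}-O(\epsilon)$ follows and the conclusion is immediate; this is exactly the content of Corollary 2.7 in \cite{A}, and the argument transfers verbatim once Lemma \ref{dicho} is available in the present setting.
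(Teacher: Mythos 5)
Your argument is correct and is precisely the deduction the paper has in mind when it states that the corollary follows from Lemma \ref{dicho} ``as in \cite{A} Corollary 2.7'': compare $E(g_n)$, $E(h_n)$ with $I_{M(g_n)}$, $I_{M(h_n)}$, use the scaling identity \eqref{claim} for the continuity of $q\mapsto I_q$, and pass to the limit in $n$ and then in $\epsilon$. The only cosmetic caveat is to note explicitly that $M(g_n)>\lambda-\epsilon>0$ and $M(h_n)>q-\lambda-\epsilon>0$ for $\epsilon$ small, so that both are admissible constraint values; otherwise the proof is complete.
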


Corollary \ref{cor1} shows why dichotomy cannot hold.
We now prove that vanishing does not occur.
  \begin{lemma}\label{vanish}
 For every minimizing sequence,  $\lambda>0.$
 \end{lemma}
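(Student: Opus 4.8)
The plan is to rule out vanishing ($\lambda = 0$) by contradiction, using the technical Lemma~\ref{tech} together with the lower bound on $|v_n|_3$ from Lemma~\ref{bound}. The key idea is that vanishing means the $L^2$-mass spreads out so thinly that no bounded window captures a fixed fraction of it; but a nontrivial $L^3$-norm forces mass to concentrate somewhere, contradicting the spreading. First I would recall the consequence of $\lambda = 0$: since $\mathfrak M_n \to \mathfrak M$ uniformly on compacts and $\lim_{r\to\infty}\mathfrak M(r) = \lambda = 0$, the nondecreasing limit $\mathfrak M$ is identically zero, so for the fixed radius $r = 2$ we get $\mathfrak M(2) = 0$, and hence
\begin{equation}
\sup_{y\in\R}\int_{y-2}^{y+2}|v_n|^2\,dx = \mathfrak M_n(2) \longrightarrow 0 \quad \text{as } n\to\infty.
\end{equation}

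Next I would pass from control of the $L^2$-mass on windows to control of the $L^3$-mass on windows, which is what Lemma~\ref{tech} produces. By Lemma~\ref{bound}, the sequence is bounded in $H^{\frac\alpha2}$, say $\|v_n\|_{\frac\alpha2}\le B$, and satisfies $|v_n|_3 \ge \delta$ for all large $n$. Then Lemma~\ref{tech}, applied with these $B$ and $\delta$, furnishes a fixed $\eta = \eta(B,\delta) > 0$ such that
\begin{equation}
\sup_{y\in\R}\int_{y-2}^{y+2}|v_n(x)|^3\,dx \ge \eta
\end{equation}
for all large $n$. The task is now to show this lower bound on local $L^3$-mass is incompatible with the vanishing of local $L^2$-mass obtained above.

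The bridge between the two is a local interpolation/Sobolev estimate: on each fixed window $[y-2,y+2]$ one controls the $L^3$-norm by the $L^2$-norm on a slightly larger window times a fixed power of the $H^{\frac\alpha2}$-norm. Concretely, using a Gagliardo--Nirenberg inequality localized to a bounded interval (with $\frac\alpha2 > \frac16$ guaranteeing $H^{\frac\alpha2}\hookrightarrow L^3$ locally), I would obtain a bound of the schematic form
\begin{equation}
\int_{y-2}^{y+2}|v_n|^3\,dx \le C\left(\int_{y-3}^{y+3}|v_n|^2\,dx\right)^{\theta}\|v_n\|_{\frac\alpha2}^{3-2\theta}
\end{equation}
for some $\theta > 0$, uniformly in $y$. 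Taking the supremum over $y$ and using $\sup_y\int_{y-3}^{y+3}|v_n|^2 = \mathfrak M_n(3)\to 0$ together with the uniform $H^{\frac\alpha2}$-bound forces $\sup_y\int_{y-2}^{y+2}|v_n|^3 \to 0$, contradicting the lower bound $\eta > 0$. Hence $\lambda > 0$.

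The main obstacle I anticipate is the localized interpolation inequality itself: one must show the constant $C$ is independent of the center $y$ and that the $H^{\frac\alpha2}$-norm (a nonlocal fractional norm) restricted to a window can genuinely be bounded by the global one with a uniform constant, since $D^{\frac\alpha2}$ does not respect cutoffs cleanly. In fact this is precisely the content packaged into Lemma~\ref{tech}, whose proof (referenced to \cite{ABS}) handles the covering-and-summation argument, so the cleanest route is to invoke Lemma~\ref{tech} directly as above rather than reprove the local estimate; the contradiction then reduces to the elementary fact that a vanishing local $L^2$-mass kills the local $L^3$-mass while Lemma~\ref{tech} keeps it bounded below.
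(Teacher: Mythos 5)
Your overall strategy matches the paper's: combine Lemma~\ref{bound} and Lemma~\ref{tech} to get a uniform lower bound $\sup_y\int_{y-2}^{y+2}|v_n|^3\,dx\ge\eta$, and play this off against the smallness of the windowed $L^2$ mass. (The paper argues directly, concluding $\mathfrak M(2)\ge \eta^2/C^2>0$, rather than by contradiction, but that is cosmetic.) The genuine gap is in your bridge step. You correctly identify that everything hinges on an inequality of the form $\int_{y-2}^{y+2}|v_n|^3\,dx\le C\bigl(\int |v_n|^2\bigr)^{\theta}_{\text{window}}\|v_n\|_{\frac\alpha2}^{3-2\theta}$ with $\theta>0$ and $C$ uniform in $y$, you flag the localization of the fractional norm as the "main obstacle," and you then dispose of it by asserting that this is "precisely the content packaged into Lemma~\ref{tech}." That is not correct: Lemma~\ref{tech} converts a \emph{global} $L^3$ lower bound into a \emph{windowed} $L^3$ lower bound; it says nothing about dominating a windowed $L^3$ norm by a power of the windowed $L^2$ norm. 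As written, the decisive quantitative step of your proof is neither proved nor correctly attributed, so the argument does not close.

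The fix is elementary and avoids localizing the fractional norm altogether — this is exactly what the paper does. By Cauchy--Schwarz on the window and then enlarging the second factor to all of $\R$,
\begin{equation*}
\int_{y_n-2}^{y_n+2}|v_n|^3\,dx
\le\left(\int_{y_n-2}^{y_n+2}|v_n|^2\,dx\right)^{1/2}\left(\int_{\R}|v_n|^4\,dx\right)^{1/2}
\le C\left(\int_{y_n-2}^{y_n+2}|v_n|^2\,dx\right)^{1/2},
\end{equation*}
where the last step uses the Sobolev embedding $H^{\frac\alpha2}(\R)\hookrightarrow L^4(\R)$ (valid since $\alpha\ge\frac12$) together with the uniform bound $\|v_n\|_{\frac\alpha2}\le B$ from Lemma~\ref{bound}. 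This gives your schematic inequality with $\theta=\frac12$, the same window on both sides, and a constant independent of $y$; combined with $\eta\le\sup_y\int_{y-2}^{y+2}|v_n|^3\,dx$ it yields $\mathfrak M_n(2)\ge(\eta/C)^2$ for all large $n$, hence $\lambda\ge\mathfrak M(2)>0$. Note also that the hypothesis $\alpha>\frac12$ enters precisely here through the $L^4$ embedding, a point your write-up does not surface.
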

 
 \begin{proof}  
 By Lemmas \ref{bound} and \ref{tech} there exist $\eta >0$ and a sequence $\lbrace y_n\rbrace$ such that 
 $$\int_{y_n-2}^{y_n+2} |v_n|^3 dx\geq \eta \quad \text{for all}\;n.$$
 Hence,
\begin{equation}
 \begin{split}
 \eta&\leq \left(\int_{y_n-2}^{y_n+2}|v_n|^2dx\right)^{1/2}\left(\int_{y_n-2}^{y_n+2}|v_n|^4dx\right)^{1/2}\\
 &\leq \left(\int_{y_n-2}^{y_n+2}|v_n|^2dx\right)^{1/2}\left(\int_\R|v_n|^4dx\right)^{1/2}\\
 &\leq C\left(\int_{y_n-2}^{y_n+2}|v_n|^2dx\right)^{1/2},
 \end{split}
 \end{equation}
 where we have used the embedding $H^{\frac{\alpha}2}(\R)\hookrightarrow L^4(\R)$ when $\alpha \geq \frac{1}{2}.$
 
Thus
$$\lambda=\lim_{r\to\infty}M(r) \geq M(2)=\lim_{n\to \infty}M_n(2)\geq \frac{\eta}{C}>0.$$
 \end{proof}
 
 We can now state and prove our main result. We first recall (see \cite{LPS2}) that the Cauchy problem for \eqref{fKdV} is locally well-posed in $H^s(\R),\; s> s_\alpha=\frac{3}{2}-\frac{3\alpha}{8}$ in the sense that for any $u_0\in H^s(\R)$ with $s$ as above, there exists a maximal time of existence $T_s \in (0,+\infty]$ and a unique solution $u$ to \eqref{fKdV} such that $u\in C(\lbrack 0,T_s);H^s(\R))$ satisfying $Q(u(\cdot,t))=Q(u_0)$ and $E(u(\cdot,t))=E(u_0), \, t \in\lbrack 0, T_s).$
 
 \begin{theorem}\label{main}
 Let $\frac{1}{2}<\alpha <1.$
 
1. For every $q>0$ there exists a nonempty set $G_q$ of minimizers of \eqref{min} consisting of solitary waves of  \eqref{fKdV} with positive velocity. Moreover, if $\{v_n\}$ is a minimizing sequence for $I_q,$ then the following assertions are true.
 
 2. There exist a sequence $\lbrace y_1,y_2,...\rbrace$ and an element $g\in G_q$ such that $\{v_n(\cdot+y_n)\}$ has a subsequence converging strongly in $H^{\frac{\alpha}2}(\R)$ to $g.$
 
 3. $$\lim_{n\to \infty}\inf_{g\in G_q, y\in \R}\|v_n(\cdot+y)-g\|_{\frac{\alpha}2}=0.$$
 
 4. $$\lim_{n\to \infty}\inf_{g\in G_q}\|v_n-g\|_{\frac{\alpha}2}=0.$$
 
 5. The set $G_q$ is stable in the following sense. For any $ \epsilon >0$ there exists $\delta >0$ such that if $u_0\in H^s(\R), s>s_\alpha,$ with
 $$\inf_{g\in G_q}\|u_0-g\|_{\frac{\alpha}2}<\delta,$$ 
 then the corresponding solution $u$ emanating from $u_0$ of \eqref{fKdV} satisfies 
 $$\inf_{g\in G_q}\|u(\cdot,t)-g\|_{\frac{\alpha}2}<\epsilon,\quad \forall \,  0<t<T_s \, .$$ 
 \end{theorem}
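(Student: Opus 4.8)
The plan is to feed the lemmas already in hand into Lions' concentration-compactness trichotomy and then to close with the standard Cazenave--Lions stability argument. First I would establish parts 1 and 2 simultaneously. Given a minimizing sequence $\{v_n\}$ for $I_q$, I form the concentration functions $\mathfrak{M}_n$ and extract the limit $\lambda\in[0,q]$ as in the excerpt. Lemma \ref{vanish} rules out vanishing, so $\lambda>0$; Corollary \ref{cor1} together with the strict subadditivity of Lemma \ref{subadd} rules out dichotomy, since $0<\lambda<q$ would force $I_q\ge I_\lambda+I_{q-\lambda}>I_q$. Hence $\lambda=q$, and Lemma \ref{compact} supplies translates $v_n(\cdot+y_n)$ with a subsequence converging in $H^{\frac\alpha2}(\R)$ to some $g\in G_q$; this is exactly part 2 and shows $G_q\neq\emptyset$. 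To see that each $g\in G_q$ is a solitary wave of positive velocity, I note that $g$ is a critical point of $E$ under the constraint $M=q$, so by the Lagrange multiplier rule there is $c\in\R$ with $E'(g)=c\,M'(g)$, which reads $D^\alpha g+cg-\frac12 g^2=0$, i.e. equation \eqref{sol}. Pairing this with the energy and Pohozaev identities exactly as in the derivation of \eqref{nonex} yields $c\,\|g\|_0^2=(3\alpha-1)\|D^{\frac\alpha2}g\|_0^2$, which is strictly positive because $\alpha>\tfrac13$ and $g\not\equiv0$ (as $M(g)=q>0$); thus $c>0$.

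Parts 3 and 4 are soft consequences of part 2. For part 3 I argue by contradiction: if $\inf_{g\in G_q,\,y\in\R}\|v_n(\cdot+y)-g\|_{\frac\alpha2}$ did not tend to $0$, some subsequence would remain at distance $\ge\epsilon_0>0$; but that subsequence is again a minimizing sequence, so part 2 produces translates of a further subsequence converging to an element of $G_q$, contradicting the lower bound. Part 4 follows from part 3 by the translation invariance of $G_q$: since $E$ and $M$ commute with spatial translations, $g\in G_q$ implies $g(\cdot-y)\in G_q$, and therefore $\inf_{g\in G_q}\|v_n-g\|_{\frac\alpha2}=\inf_{g\in G_q,\,y\in\R}\|v_n(\cdot+y)-g\|_{\frac\alpha2}$.

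The stability statement, part 5, is the one that uses the dynamics, and I would prove it by contradiction. Suppose it fails for some $\epsilon_0>0$: there are data $u_0^n\in H^s(\R)$, $s>s_\alpha$, with $\inf_{g}\|u_0^n-g\|_{\frac\alpha2}\to0$ whose solutions satisfy $\inf_{g}\|u^n(\cdot,t_n)-g\|_{\frac\alpha2}\ge\epsilon_0$ at some $t_n\in(0,T_s^n)$. Since the local theory gives $u^n\in C([0,T_s);H^s)$ with $s>s_\alpha>\frac\alpha2$, the trajectory is continuous into $H^{\frac\alpha2}(\R)$ and starts, in the limit, inside any neighbourhood of $G_q$, so by the intermediate value theorem I may take $t_n$ to be the first time at which the distance equals $\epsilon_0$. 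Writing $w_n=u^n(\cdot,t_n)$, continuity of $E$ and $M$ on $H^{\frac\alpha2}(\R)$ together with $E\equiv I_q$, $M\equiv q$ on $G_q$ give $M(u_0^n)\to q$ and $E(u_0^n)\to I_q$, and conservation of both quantities along the flow yields $M(w_n)\to q$ and $E(w_n)\to I_q$. Renormalizing by $\tilde w_n=(q/M(w_n))^{1/2}w_n$ produces $M(\tilde w_n)=q$ with $\|\tilde w_n-w_n\|_{\frac\alpha2}\to0$ (using the $H^{\frac\alpha2}$-boundedness from Lemma \ref{bound}) and $E(\tilde w_n)\to I_q$, so $\{\tilde w_n\}$ is a genuine minimizing sequence for $I_q$. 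Part 4 then forces $\inf_{g}\|\tilde w_n-g\|_{\frac\alpha2}\to0$, hence $\inf_{g}\|w_n-g\|_{\frac\alpha2}\to0$, contradicting $\inf_{g}\|w_n-g\|_{\frac\alpha2}=\epsilon_0$.

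I expect the genuine analytic difficulty to have been absorbed already into Lemma \ref{dicho} (the commutator/fractional-Leibniz estimate controlling the splitting), so that the obstacle in the theorem itself is mainly organizational: one must check that the renormalization step both restores the constraint $M=q$ and preserves the energy in the limit, and that the continuity-in-time argument is legitimate in the energy norm. The conditional character should be stressed too: because global well-posedness in $H^{\frac\alpha2}(\R)$ is not known for $\frac12<\alpha<1$, the conclusion only controls the solution on its maximal interval $[0,T_s)$, which is exactly why the statement quantifies over $0<t<T_s$ rather than all $t>0$.
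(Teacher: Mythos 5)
Your proposal is correct and follows essentially the same route as the paper: concentration--compactness with Lemmas \ref{subadd}, \ref{dicho}, \ref{vanish} and Corollary \ref{cor1} forcing $\lambda=q$, Lemma \ref{compact} giving parts 1--2, the Lagrange multiplier plus energy/Pohojaev identities (as in \eqref{nonex} and Lemma \ref{FrLe}) giving positive velocity, and the standard Cazenave--Lions contradiction-plus-renormalization argument for part 5, which the paper only sketches. The one cosmetic point is that Lemma \ref{bound} is stated for minimizing sequences, so for the boundedness of $\{w_n\}$ you should instead invoke the coercivity estimate from its proof, which only requires bounded energy and mass.
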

 
\begin{proof}
The proof is a classical application of the concentration-compactness method. By Lemmas \ref{subadd}, \ref{dicho}, \ref{vanish} and Corollary \ref{cor1} we deduce that $\lambda =q.$

We prove 2 by contradiction, assuming that there exist a subsequence $\lbrace v_{n_k}\rbrace$ of $\lbrace v_n\rbrace$ and $\epsilon >0$ such that 
$$\inf_{g\in G_q, y\in \R}\|v_{n_k}(\cdot+y)-g\|_{\frac{\alpha}2}\geq \epsilon$$
for all $k\in \N.$ Since $\{v_{n_k}\}$ is also a minimizing sequence for $I_q,$ statement 1 implies that there exist a sequence  $\lbrace y_k\rbrace$ and $g_0\in G_q$ such that 
$$\liminf_{k\to \infty} \|v_{n_k}(\cdot + y_k)-g_0\|_{\frac{\alpha}2}=0,$$
and this contradiction proves 2.

The stability statement 5 is classically proven by contradiction from 4.
\end{proof}

We now relate the set $G_q$ of minimizers to $I_q$ to the {\it ground states} as  defined in \cite{FL},  Definition 2.1.

 
\begin{definition} \label{GroundState}\cite{FL}

Let $\alpha>\frac13$. A {\it ground state solution} of
\begin{equation} \label{FrLe.2}
D^{\frac{\alpha}2}Q+Q -Q^2=0 \, ,
\end{equation}
is a positive and even solution that solves the minimization problem
\begin{equation} \label{FrLe.3}
J^\alpha (Q)=\inf \big\{ J^\alpha (u) \ : \ u\in H^{\frac{\alpha}2}(\R)\setminus \lbrace 0\rbrace\big\} \, ,
\end{equation}
where $J^{\alpha}$ is the Weinstein functional defined by
\begin{equation} \label{FrLe.4}
J^\alpha (u)= \left(\int_\R|u|^3dx\right)^{-1}\left(\int_\R |D^{\frac{\alpha}2} u|^2 dx\right)^{\frac1{2\alpha}}\left(\int_\R u^2 dx\right)^{\frac{3\alpha -1}{2\alpha}} \, .
\end{equation}
\end{definition}

\begin{lemma} \label {FrLe}
Let $q>0$ and $\frac{1}{2}<\alpha <1$. Any minimizer $\psi$ of $I_q$ writes  
\begin{equation} \label{FrLe.1}
\psi=c Q\big(c^{\frac1\alpha}(\cdot+y)\big)
\end{equation} 
for some $y\in \R$ and $c>0$ chosen  to ensure that $\frac{1}{2}\int_\R \psi^2 dx=q$ holds and $Q$ is a  ground state solution of \eqref{FrLe.2}.
\end{lemma}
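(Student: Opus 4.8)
The plan is to route $\psi$ through three successive characterizations: as a solution of the solitary-wave equation \eqref{sol}, as an optimizer of the sharp Gagliardo--Nirenberg inequality \eqref{GN}, and finally as a rescaled translate of the positive even ground state.

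First I would record the Euler--Lagrange equation. Since $\psi$ minimizes $E$ under the constraint $M(\psi)=q$ of \eqref{min}, the Lagrange multiplier rule produces a constant $c\in\R$ with $D^\alpha\psi-\tfrac12\psi^2+c\psi=0$; that is, $\psi$ solves \eqref{sol}. Testing this equation against $\psi$ reproduces \eqref{EnergyIdentity}, while pairing it against $x\psi'$ and using \eqref{Poh} reproduces \eqref{PohojaevIdentity}; combining the two yields \eqref{nonex}, so that $c=\frac{(3\alpha-1)\int_\R|D^{\alpha/2}\psi|^2}{\int_\R\psi^2}$. Because $\alpha>\tfrac13$, because $\psi\not\equiv0$ (the constraint $M(\psi)=q>0$ forbids it), and because a nonzero $L^2(\R)$ function cannot satisfy $D^{\alpha/2}\psi\equiv0$, both integrals are strictly positive and hence $c>0$.

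The heart of the argument is to show that $\psi$ saturates \eqref{GN}. Fix $B=\int_\R\psi^2=2q$, write $A=\int_\R|D^{\alpha/2}\psi|^2$, and let $C^\ast$ be the optimal constant in \eqref{GN}, so that $C^\ast$ is the reciprocal of $J^\alpha_{\min}:=\inf J^\alpha$, the latter attained at a ground state. For every $u$ with $\int_\R u^2=B$ one has $\int_\R u^3\le\int_\R|u|^3\le C^\ast A^{1/(2\alpha)}B^{(3\alpha-1)/(2\alpha)}$, whence $E(u)\ge\Phi(A):=\tfrac12A-\tfrac16 C^\ast A^{1/(2\alpha)}B^{(3\alpha-1)/(2\alpha)}$. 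Since $\tfrac12<\alpha<1$ forces the exponent $\tfrac1{2\alpha}\in(\tfrac12,1)$, the map $\Phi$ is strictly decreasing and then strictly increasing, with a unique minimizer $A^\ast>0$ and $\Phi(A^\ast)<0$. One checks that in fact $I_q=\Phi(A^\ast)$: a two-parameter dilation $aQ(b\,\cdot)$ of a ground state always saturates \eqref{GN}, and the parameters $a,b>0$ can be tuned so that $\int_\R u^2=B$ and $\int_\R|D^{\alpha/2}u|^2=A^\ast$ hold simultaneously, producing a competitor with $E=\Phi(A^\ast)$. Consequently $E(\psi)=I_q=\Phi(A^\ast)$ forces equality throughout $E(\psi)\ge\Phi(A)\ge\Phi(A^\ast)$: we obtain $A=A^\ast$, equality in \eqref{GN}, and $\int_\R\psi^3=\int_\R|\psi|^3$. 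The last identity gives $\psi\ge0$, and equality in \eqref{GN} then reads precisely $J^\alpha(\psi)=J^\alpha_{\min}$.

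Finally I would rescale and identify the symmetry. A direct scaling computation shows that solutions of \eqref{FrLe.2} generate solutions of \eqref{sol} through a dilation of the form $\psi(x)=c\,Q\big(c^{1/\alpha}(x+y)\big)$ (the amplitude prefactor being fixed by matching the two equations), and that imposing $\tfrac12\int_\R\psi^2=q$ pins down $c$; this already exhibits $\psi$ in the stated form with $Q$ a nonnegative minimizer of $J^\alpha$. The remaining point---and the step I expect to be the main obstacle---is to upgrade \emph{nonnegative minimizer of the Weinstein functional} to \emph{ground state}, i.e.\ to a translate of the positive even solution. This is the only nonvariational input, and I would supply it from the symmetrization and uniqueness theory of \cite{FL,FLS}: the P\'olya--Szeg\H{o} inequality for $D^{\alpha/2}$ shows that the symmetric decreasing rearrangement is again a minimizer of $J^\alpha$, and the analysis of its equality cases, together with the positivity and uniqueness results quoted there, forces $\psi$ to be a translate of the even ground state. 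Granting this classification completes the proof.
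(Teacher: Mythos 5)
Your proposal is correct, but the core of your argument is genuinely different from the paper's. After the common first step (Euler--Lagrange equation with multiplier $c>0$, forced by the energy and Pohojaev identities), the paper proceeds by explicit comparison with the specific competitor $Q_{c_\star}$: it expresses both $J^\alpha(\psi)$ and $E(\psi)$ as explicit functions of the Lagrange multiplier $\theta_q$, and squeezes $\theta_q=c_\star$ from the two scalar inequalities $J^\alpha(\psi)\ge J^\alpha(Q_{c_\star})$ (see \eqref{FrLe.10}) and $E(\psi)\le E(Q_{c_\star})$ (see \eqref{FrLe.11}), the second of which is where $\alpha>\tfrac12$ enters through the sign of $\tfrac12-\alpha$ in \eqref{FrLe.7b}; positivity of $\psi$ is obtained separately via the rearrangement-type bound $|D^{\alpha/2}|\psi||_2\le|D^{\alpha/2}\psi|_2$ from \cite{FQT}. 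You instead run the classical Weinstein scheme: minorize $E$ on the constraint set by the one-variable function $\Phi(A)=\tfrac12A-\tfrac16C^\ast A^{1/(2\alpha)}B^{(3\alpha-1)/(2\alpha)}$, identify $I_q=\min\Phi$ by tuning the two-parameter dilations of the ground state (which all saturate \eqref{GN} by the scale invariance of $J^\alpha$), and read off saturation of every inequality in the chain. Your version makes the role of $\alpha>\tfrac12$ transparent (it is exactly what gives $\tfrac1{2\alpha}<1$ and hence the coercive shape of $\Phi$) and delivers $\psi\ge0$ for free from the equality $\int\psi^3=\int|\psi|^3$, so the rearrangement inequality is not needed; the paper's version is shorter once the identities \eqref{FrLe.5}--\eqref{FrLe.7} are in hand and never introduces $\Phi$. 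Both arguments terminate with the same nonvariational input, namely the Frank--Lenzmann classification of minimizers of the Weinstein functional (Theorem \ref{FrLetheo}, i.e.\ Theorem 2.4 of \cite{FL}), which you correctly identify as the one step that cannot be made purely variational; the only cosmetic caveat is that in your last paragraph the logical order should be: first invoke that classification to write $\psi=\beta Q(\lambda(\cdot+y))$, and only then use the Euler--Lagrange equation \eqref{sol} and the constraint $M(\psi)=q$ to pin down $\beta=c$ and $\lambda=c^{1/\alpha}$.
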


In order to prove Lemma \ref{FrLe}, we recall the fundamental result\footnote{stated here in our context.} of Frank and Lenzmann in Theorem 2.4 of \cite{FL}. 
\begin{theorem} \label{FrLetheo}
Let $\alpha>\frac13$. Then, the ground state solution $Q=Q(|x|)>0$ of equation \eqref{FrLe.2} is unique. 

Furthermore, every minimizer $v \in H^{\frac{\alpha}2}(\mathbb R)$ for the Weinstein functional $J^{\alpha}$ defined in \eqref{FrLe.4} is of the form $v=\beta Q(\lambda(\cdot+y))$ for some $\beta \in \mathbb C$, $\beta \neq 0$, $\lambda>0$ and $y \in \mathbb R$.
\end{theorem}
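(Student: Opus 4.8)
The plan is to prove Theorem \ref{FrLetheo}, which asserts two things: first, uniqueness of the positive even ground state $Q$ solving \eqref{FrLe.2}, and second, the classification of all minimizers of the Weinstein functional $J^\alpha$ as $v = \beta Q(\lambda(\cdot+y))$. Since this is a deep result of Frank and Lenzmann, my proposal is to reconstruct the architecture of their proof rather than to produce every estimate. The two halves are logically linked: the classification of Weinstein minimizers rests on uniqueness, and uniqueness is the genuinely hard analytic statement. I would organize the argument so that the scaling/variational reduction is handled first, isolating uniqueness as the single nontrivial core.

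First I would handle the classification half assuming uniqueness. Given any minimizer $v$ of $J^\alpha$, the Euler--Lagrange equation for the scale-invariant functional $J^\alpha$ shows that $v$ solves \eqref{FrLe.2} up to the three invariances that leave $J^\alpha$ unchanged: multiplication by a nonzero constant $\beta$, dilation by $\lambda>0$, and translation by $y$. Concretely, one computes $\delta J^\alpha(v)=0$, and after absorbing the Lagrange multipliers into a rescaling $v \mapsto \beta^{-1}v(\lambda^{-1}\cdot - y)$, the rescaled function $\tilde v$ satisfies $D^\alpha \tilde v + \tilde v - \tilde v^2 = 0$ with $J^\alpha(\tilde v)=J^\alpha(v)=\min J^\alpha$. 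The remaining task is to show $\tilde v$ can be taken positive and even. Positivity follows from the fact that $J^\alpha(|u|)\le J^\alpha(u)$, using that $D^{\alpha/2}$ does not increase the $\dot H^{\alpha/2}$ seminorm under the operation $u\mapsto |u|$ (the key inequality $\||D|^{\alpha/2}|u|\|_2 \le \||D|^{\alpha/2}u\|_2$, valid for $0<\alpha<2$ via the Balinsky--Evans / symmetric-decreasing machinery), so any minimizer may be replaced by a nonnegative one, which by the maximum principle for $D^\alpha+1$ is strictly positive. Evenness (after translation) follows from symmetric-decreasing rearrangement, which strictly decreases the relevant seminorm unless $\tilde v$ is already a translate of an even, radially decreasing function. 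Once $\tilde v$ is a positive even solution, uniqueness forces $\tilde v = Q$, giving $v = \beta Q(\lambda(\cdot+y))$.

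The main obstacle, and the heart of the theorem, is uniqueness of $Q$. Here I would follow the Frank--Lenzmann strategy based on the spectral nondegeneracy of the linearized operator $L_+ = D^\alpha + 1 - 2Q$ acting on $L^2(\R)$. The plan has three steps. (i) Establish regularity and sharp decay of $Q$, together with real-analyticity in a strip, so that the ODE/integral-equation uniqueness arguments of the local theory apply despite the nonlocality of $D^\alpha$; this uses the explicit structure of the Bessel-type kernel of $(D^\alpha+1)^{-1}$. (ii) Prove that $\ker L_+ = \mathrm{span}\{Q'\}$ on $L^2(\R)$, i.e. the only kernel direction is the one forced by translation invariance. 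This nondegeneracy is the crux: it is proved by continuation in $\alpha$ from the known KdV/NLS endpoint combined with an Emden--Fowler--type change of variables and a careful Perron--Frobenius/oscillation analysis of the radial operator, controlling that no new kernel elements appear as $\alpha$ varies in $(\tfrac13,1)$. (iii) Use nondegeneracy to run an implicit-function-theorem continuation argument: the set of $\alpha$ for which the positive even ground state is unique is shown to be both open (by the implicit function theorem, since $L_+$ is invertible on the even subspace) and closed (by the a priori bounds and compactness from the variational characterization), hence all of $(\tfrac13,1)$.

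I expect step (ii), the nondegeneracy $\ker L_+ = \mathrm{span}\{Q'\}$, to be the decisive difficulty, precisely because the nonlocality of $D^\alpha$ destroys the Sturm--Liouville oscillation theory that makes the local ($\alpha=2$) case elementary. Everything else — the rearrangement arguments for positivity and evenness, the scaling reduction, and the final continuation — is structurally standard once nondegeneracy is in hand. In the write-up I would state the regularity, decay, and nondegeneracy facts as the substantive inputs drawn from \cite{FL}, reproduce the rearrangement and scaling reduction in full, and present the open/closed continuation argument as the mechanism that upgrades local uniqueness near a reference exponent to uniqueness on the whole subcritical range $\tfrac12<\alpha<1$ relevant here.
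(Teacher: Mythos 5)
You should first be aware that the paper contains no proof of Theorem \ref{FrLetheo}: it is Frank and Lenzmann's theorem (Theorem 2.4 of \cite{FL}), restated in the paper's notation, and the footnote attached to it says exactly that. The theorem enters this paper purely as an external input to Lemma \ref{FrLe}. So the ``paper's approach'' to this statement is citation, and there is no internal argument to measure your proposal against.

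Measured on its own terms, your proposal is not an independent proof either, and you are candid about this: the regularity and decay theory, and above all the nondegeneracy $\ker L_+=\mathrm{span}\{Q'\}$ for $L_+=D^{\alpha}+1-2Q$, are declared to be substantive inputs drawn from \cite{FL}. Since the nondegeneracy and the continuation scheme built on it \emph{are} the content of the theorem, importing them from \cite{FL} makes the argument circular as a verification; in substance you are doing what the paper does (citing Frank--Lenzmann), dressed as a proof outline. As a description of how the result is actually obtained, your outline is broadly faithful: the rearrangement/positivity reduction and the scaling normalization do reduce the classification of Weinstein minimizers to uniqueness of the positive even solution, and uniqueness in \cite{FL} is indeed run by an implicit-function-theorem continuation in the exponent, with nondegeneracy supplying invertibility and a priori compactness bounds supplying closedness. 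Two details are misstated: in \cite{FL} the nondegeneracy is proved directly for each fixed exponent (by spectral and oscillation-type arguments adapted to the nonlocal operator), while continuation in the exponent --- starting from the local case $\alpha=2$, hence necessarily traversing exponents above $1$, not merely the fKdV range $\frac12<\alpha<1$ --- is the mechanism for uniqueness, not for nondegeneracy; and your evenness step silently requires the equality cases of the fractional P\'olya--Szeg\H{o}/rearrangement inequalities, which are themselves nontrivial imports. None of this is a wrong step; the point is simply that neither you nor the paper proves the statement, and a write-up should present it as the citation it is rather than as a proof.
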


\begin{proof}[Proof of Lemma \ref{FrLe}] Assume that $q>0$ is fixed. Let $Q$ be a ground state of \eqref{FrLe.2} defined as above. Observe that for any $c>0$, $Q_c=cQ(c^{\frac1\alpha}\cdot)$ is a solution to \eqref{sol}. It follows from\eqref{EnergyIdentity} and \eqref{PohojaevIdentity} that 
\begin{equation} \label{FrLe.5} 
\int_{\mathbb R} |D^{\frac{\alpha}2}Q_c|^2dx=\frac{c}{3\alpha-1} \int_{\mathbb R} Q_c^2 dx
\end{equation}
and 
\begin{equation} \label{FrLe.6}
\int_{\mathbb R} Q_c^3dx=\frac{6\alpha c}{3\alpha-1}  \int_{\mathbb R} Q_c^2 dx \, .
\end{equation}
Therefore, a straightforward computation gives that 
\begin{equation} \label{FrLe.7} 
J^{\alpha}(Q_c)=\frac{(3\alpha-1)^{1-\frac1{2\alpha}}}{6\alpha}  c^{\frac1{2\alpha}-1} \|Q_c\|_{L^2}
=\frac{(3\alpha-1)^{1-\frac1{2\alpha}}}{6\alpha}   \|Q\|_{L^2} \, .
\end{equation}
Note in particular that the minimum of $J^{\alpha}$ is attained for every $Q_c$ with $c>0$. Moreover, we choose $c_{\star}>0$ such that 
\begin{equation} \label{FrLe.7a}
M(Q_{c_{\star}})=q \quad \Leftrightarrow \quad c_{\star}=\left(\frac{2q}{\|Q\|_{L^2}^2} \right)^{\frac{\alpha}{2\alpha-1}} \, .
\end{equation}
Another easy computation yields 
\begin{equation} \label{FrLe.7b}
E(Q_{c_{\star}})=\frac{c_{\star}}{3\alpha-1}(\frac12-\alpha) 2q \, .
\end{equation}

Now, let $\psi \in G_q$, \textit{i.e.} $\psi$ is a minimizer of $I_q$. By the Lagrange multipliers theory, there exists $\theta_q \in \mathbb R$ such that 
\begin{equation} \label{FrLe.8} 
D^{\alpha}\psi-\frac12\psi^2+\theta_q \psi=0 \, .
\end{equation}

By using the energy and Pohojaev identities, we deduce exactly as in \eqref{FrLe.5} and \eqref{FrLe.6} that
\begin{equation} \label{FrLe.9} 
\int_{\mathbb R} |D^{\frac{\alpha}2}\psi|^2dx=\frac{\theta_q}{3\alpha-1} \int_{\mathbb R} \psi^2 dx=\frac{2q\theta_q}{3\alpha-1} 
\end{equation}
and 
\begin{equation} \label{FrLe.9a}
\int_{\mathbb R} \psi^3dx=\frac{6\alpha \theta_q}{3\alpha-1}  \int_{\mathbb R} \psi^2 dx= \frac{12q\alpha \theta_q}{3\alpha-1}  \, .
\end{equation}
Identities \eqref{FrLe.9} and  \eqref{FrLe.9a} imply in particular that $\theta_q>0$ and $\int_{\mathbb R} \psi^3dx>0$, since $\alpha>\frac13$.

Next, we prove that $\psi$ must be positive. Indeed, recall that 
\begin{displaymath} 
|D^{\frac\alpha2}(|\psi|)|_2 \le |D^{\frac\alpha2}\psi|_2 \, ,
\end{displaymath}
for $\frac12<\alpha<1$. This claim follows for example from estimate (2.10) in \cite{FQT}. Therefore, we deduce that 
$E(|\psi|) \le E(\psi)$ and $M(|\psi|)=q$, since we also have 
\begin{equation} \label{FrLe.9b}
\int_{\mathbb R} \psi^3dx=\left| \int_{\mathbb R} \psi^3dx \right| \le \int_{\mathbb R} |\psi|^3dx \, .
\end{equation}
Moreover, if $\psi$ is not positive on $\mathbb R$, then the inequality in \eqref{FrLe.9b} is strict, so that $E(|\psi|)<E(\psi)$, which is a contradiction since $\psi \in G_q$.

We compute as above that 
\begin{displaymath} 
J^{\alpha}(\psi)=\frac{(2q)^{\frac12}}{\theta_q^{1-\frac1{2\alpha}}} \frac{(3\alpha-1)^{1-\frac1{2\alpha}}}{6\alpha} \, .
\end{displaymath}
On the one hand, since $J^{\alpha}(\psi) \ge J^{\alpha}(Q_{c_{\star}})$, it follows from \eqref{FrLe.7} and the definition of $c_{\star}$ in \eqref{FrLe.7a} that
\begin{equation} \label{FrLe.10}
\theta_q \le c_{\star} \, .
\end{equation}
On the other hand, another simple computation gives  that
\begin{displaymath} 
E(\psi)=\frac{\theta_q}{3\alpha-1}(\frac12-\alpha) 2q \, .
\end{displaymath}
Since $\psi \in G_q$, we have $E(\psi) \le E(Q_{c_{\star}})$ which implies from \eqref{FrLe.7b} that 
\begin{equation} \label{FrLe.11}
\theta_q \ge c_{\star} \, ,
\end{equation}
in the case $\alpha>\frac12$. We conclude gathering \eqref{FrLe.10} and \eqref{FrLe.11} that 
\begin{equation} \label{FrLe.12}
\theta_q=c_{\star} \, .
\end{equation}
Therefore $J^{\alpha}(\psi)=J^{\alpha}(Q)$ and we conclude from the uniqueness result in Theorem \ref{FrLetheo} that $\psi=Q_{c_{\star}}(\cdot-y)$, for some $y \in \mathbb R$.
 \end{proof}
 
 Finally, as a consequence of Theorem \ref{main} and Lemma \ref{FrLe}, we get the orbital stability of the ground states. 
 \begin{theorem} \label{OrbStab} 
 Let $\frac12<\alpha<1$, $c>0$ and $Q_c=cQ(c^{\frac1{\alpha}}\cdot)$, where $Q$ is the ground state solution of \eqref{FrLe.2}.
 For every $\epsilon>0$, there exists $\delta>0$ such that if $u_0 \in H^s(\mathbb R)$, $s> s_\alpha=\frac{3}{2}-\frac{3\alpha}{8}$, satisfy 
 \begin{equation} \label{OrbStab.1}
 \|u_0-Q_c\|_{\frac{\alpha}2} < \alpha \, ,
 \end{equation}
  then the corresponding solution $u$ emanating from $u_0$ of \eqref{fKdV} satisfies 
 \begin{equation} \label{OrbStab.2}
 \inf_{y \in \mathbb R}\|u(\cdot,t)-Q_c(\cdot+y)\|_{\frac{\alpha}2}<\epsilon
 \end{equation}
for all $t \in [0,T_s)$, where $T_s$ is the maximal time of existence of $u$.
 \end{theorem}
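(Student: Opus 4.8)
The plan is to obtain Theorem \ref{OrbStab} as an essentially immediate corollary of the set-stability statement (part 5) of Theorem \ref{main}, combined with the structural description of the minimizing set $G_q$ furnished by Lemma \ref{FrLe}. The conceptual point is that, for the right choice of $q$, the set $G_q$ coincides exactly with the translation orbit of the single ground state $Q_c$; stability of $G_q$ then translates verbatim into stability of $Q_c$ modulo spatial translations. Since the hypotheses on $u_0$ (namely $u_0 \in H^s(\R)$ with $s>s_\alpha$) and the conservation framework are identical to those already used in Theorem \ref{main}, no new analytic input on the Cauchy problem is needed.

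First I would fix, given $c>0$, the value $q:=M(Q_c)=\frac12\int_\R Q_c^2\,dx$. Inverting the scaling relation \eqref{FrLe.7a}, this choice produces precisely $c_\star=c$, so that the ground state $Q_c$ in the statement is exactly the distinguished minimizer $Q_{c_\star}$ of Lemma \ref{FrLe}. Next I would identify $G_q$ with the orbit $\{Q_c(\cdot-y):y\in\R\}$: by Theorem \ref{main} part 1 the set $G_q$ is nonempty, by Lemma \ref{FrLe} every element of $G_q$ is a translate of $Q_c$, and conversely, since both $E$ and $M$ are translation invariant, every translate of a minimizer is again a minimizer. Hence $G_q=\{Q_c(\cdot-y):y\in\R\}$ and in particular $Q_c\in G_q$. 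With this in hand the transfer is mechanical: the hypothesis $\|u_0-Q_c\|_{\frac{\alpha}{2}}<\delta$ forces $\inf_{g\in G_q}\|u_0-g\|_{\frac{\alpha}{2}}<\delta$, so Theorem \ref{main} part 5 yields $\inf_{g\in G_q}\|u(\cdot,t)-g\|_{\frac{\alpha}{2}}<\epsilon$ for all $t\in[0,T_s)$, and because $G_q$ is exactly the translation orbit of $Q_c$ this infimum equals $\inf_{y\in\R}\|u(\cdot,t)-Q_c(\cdot+y)\|_{\frac{\alpha}{2}}$, which is precisely \eqref{OrbStab.2}. The only bookkeeping remark is that $y\mapsto -y$ is a bijection of $\R$, so taking the infimum over $Q_c(\cdot-y)$ or over $Q_c(\cdot+y)$ gives the same set.

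The hard part, and really the only non-formal step, is the identification $G_q=\{Q_c(\cdot-y):y\in\R\}$, which hinges entirely on the uniqueness assertion of Theorem \ref{FrLetheo} as exploited in Lemma \ref{FrLe}. Without that uniqueness one could only conclude stability of the a priori larger minimizing set $G_q$, not of the individual solitary wave; it is exactly the Frank--Lenzmann uniqueness result that collapses $G_q$ onto a single orbit and upgrades set stability to genuine orbital stability of $Q_c$. Everything downstream of this identification is routine, as it merely rephrases conclusions already established for $G_q$.
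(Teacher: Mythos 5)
Your proof is correct and follows exactly the route the paper intends: the paper states Theorem \ref{OrbStab} without further argument as ``a consequence of Theorem \ref{main} and Lemma \ref{FrLe}'', and your write-up supplies precisely the missing details (choosing $q=M(Q_c)$ so that $c_\star=c$, identifying $G_q$ with the translation orbit of $Q_c$ via the Frank--Lenzmann uniqueness, and transferring part 5 of Theorem \ref{main}).
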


\begin{remark}
The (orbital) stability statement in Theorem \ref{main} is a conditional one. It would become {\it unconditional} provided one establishes the {\it global} well-posedness of the Cauchy problem for data in the  space $H^s(\R),$ $s\leq \frac{\alpha}{2}$ when $\alpha>1/2.$  As previously mentioned, the best known result (\cite{LPS2}) establishes the local well-posedness of the Cauchy problem in $H^s(\R), s>\frac{3}{2}-\frac{3\alpha}{8},$ for any $\alpha >0.$ On the other hand  it is proved in \cite{GV} that  when $\alpha>\frac{1}{2},$ global  weak $L^2$ solutions exist, as well as   global $H^{\frac{\alpha}2}$ weak solutions, uniqueness being unknown. Also  the numerical simulations of \cite{KS} suggest that no finite time blow-up occurs when $\alpha >\frac{1}{2}$, at least for smooth and localized initial data. Recall that when $1< \alpha <2$ the Cauchy problem is globally well-posed for initial data in $L^2(\R)$ (\cite{HeIoKeKo}).
\end{remark}

\begin{remark}\label{linstab}
It has been established in \cite{KaSt} that the ground state is spectrally stable when $\alpha>\frac{1}{2}.$
\end{remark}

\begin{remark}
The results above extend {\it mutatis mutandis} to the generalized fractional KdV equation
\begin{equation}
\label{GfKdV}
u_t+u^pu_x-D^\alpha u_x=0,\quad u(.,0)=u_0
\end{equation}
in the $L^2$ subcritical case, that is $\alpha>\frac{p}{2}.$
\end{remark}

\begin{remark}
It would be interesting to prove the {\it asymptotic stability} of the ground states of \eqref{fKdV} and also the existence (and stability) of {\it multisoliton solutions} of \eqref{fKdV}. Such solutions have been proven to exist and to be stable (in the subcritical case) for the generalized Korteweg-de Vries equations (see \cite{Ma, MMT}).\end{remark}

\subsection{Remarks on instability}
Instability of solitary wave solutions  of the gKV equation
\begin{equation}\label{gKdV}
u_t+u_x+u^pu_x+u_{xxx}=0,
\end{equation}
has been established in \cite{BSS} when $p> 4$ and in \cite{MM2} for $p=4.$
  \vspace{0.3cm}   
  
The mechanism of instability and the links with  finite type blow-up are now well understood in the $L^2$ critical case $p=4$ (see \cite{MM}, \cite{MMR1, MMR2, MMR3} for theoretical studies and \cite{KP} for numerical simulations).
  
   However a precise description of the instability in the super critical case $p>4$ and in particular the proof of finite type blow-up are not known. Note that the link between instability and finite type blow-up is strongly suggested by the numerical simulations in \cite{BDKM} and \cite{KP} (where the $L^2$ critical case is also considered).
  
  We now turn to the expected instability  of the fKdV solitary waves when $\frac{1}{3}<\alpha\leq\frac{1}{2}.$ The numerical simulations in \cite{KS2} suggest that the instability mechanism is via finite time blow-up, similar to the KdV $L^2$ critical when $\alpha =1/2$ and to the KdV $L^2$ supercritical case when $ 1/3<\alpha <1/2.$ Proving such results appears to be out of reach, and we should restrict to the mere instability proof. As in \cite{BSS} the first step is to give a sense to the formal conserved quantity
  
  \begin {equation}\label{I}
  I(u)=\int_\R u dx.
  \end{equation}

Exactly as in  Proposition 2.1 in \cite{BSS}, one checks that if $u_0\in H^s(\R), s\geq 1+\alpha$ is such that $\int_{-\infty}^{\infty} u_0(x) dx$ converges as a generalized Riemann integral, then $I(u(t))$ converges for any $t\in [0,T_s(u_0))$ and is constant, where $T_s(u_0)$ is the lifespan of the solution $u$ of the corresponding Cauchy problem.

Again as in \cite{BSS} one has to estimate how fast the tail of $I(u)$ near infinity grows with $t$. This cannot be deduce directly from Theorem 2.2 in \cite{BSS} since 
$$G_\alpha(x)=\int_{-\infty}^\infty e^{i(x\xi-\xi|\xi|^\alpha)} d\xi$$
is not  a bounded function of $x$  when $\alpha<1. $

 Actually, (see \cite{SSS}), $G_\alpha(x)=O(x^{-(\alpha+2)})$ as $x\to +\infty$ and oscillates when $x\to -\infty,$ growing as $|x|^{(1-\alpha)/2\alpha}.$ 
 






In order to prove the equivalent of  Theorem 2.2 in \cite{BSS}, one would need to impose a (one sided) decay property to $u_0$ insuring that the resulting solution of the Cauchy problem decays sufficiently to the left to compensate the growth of the fundamental solution.
  
  \section {The fBBM equation}
  
  As previously noticed an alternative to the toy model \eqref{fKdV} is the fractional Benjamin-Bona-Mahony equation (fBBM) \eqref{fBBM}.
  
  
  A solitary wave solution $u_c(x,t)=\phi(x-ct), c>0$ of \eqref {fBBM} satisfies the equation
  
   \begin{equation}\label{SWfBBM}
   (c+D^\alpha)u-\frac{u^2}{2}=0.
   \end{equation}
   
   Existence and stability issues for \eqref{SWfBBM} have been considered in \cite{Z} when $\alpha >1$ but the proofs therein extend readily to the  case  $\alpha<1.$  

More precisely, Zeng considers the minimization problem

\begin{equation}\label{min}
I_q=\inf\lbrace F(u) : u\in H^{\frac\alpha2}(\R)\;\text{and}\; N(u)=q\rbrace,
\end{equation}
where
$$F(u)=\int_\R (u^2+|D^{\alpha/2}u|^2)$$
and 
$$N(u)=\int_\R \left(\frac{u^2}{2}+\frac{u^3}{6}\right).$$
He thus considers the set of {\it ground state solutions} of \eqref{fBBM}, that is
$$G_q= \lbrace u\in H^{\frac\alpha2}(\R) : N(u)=q\;\text{and}\; F(u)=I_q \rbrace.$$

The results established in \cite{Z} for $\alpha\geq 1$ and general nonlinearities $u^pu_x$ extends without any noticeable change in our case and imply the following theorem.
\begin{theorem}\label {Zeng}\

1. Assume that $\frac{1}{2}<\alpha <1.$ Then the set $G_q$ is not empty and orbitally stable in $H^{\frac\alpha2}(\mathbb R).$

2. Assume that $\frac{1}{3}<\alpha <\frac{1}{2}.$Then there exists $q_0=q_0(\alpha)$ such that for all $q>q_0,$ $G_q$ is not empty and orbitally stable in $H^{\frac\alpha2}(\mathbb R).$
\end{theorem}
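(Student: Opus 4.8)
The plan is to run the concentration-compactness method \cite{PLL1,PLL2} on the constrained minimization problem \eqref{min}, following step by step the scheme of Section~2 and of Zeng \cite{Z}; the only genuinely new points are the nonlocality of $D^\alpha$ in the dichotomy step and the determination of the admissible range of $q$. Here the boundedness of minimizing sequences is immediate: since $F(u)=\int_\R(u^2+|D^{\frac\alpha2}u|^2)\,dx$ is an equivalent $H^{\frac\alpha2}$-norm squared, any minimizing sequence $\{v_n\}$ for $I_q$ satisfies $F(v_n)\to I_q$ and is therefore bounded in $H^{\frac\alpha2}(\R)$. Moreover $I_q>0$ for $q>0$: if $F(u)$ were small then both $\int u^2$ and $\int|D^{\frac\alpha2}u|^2$ would be small, and by \eqref{GN} so would $\int|u|^3$, forcing $N(u)\to0\neq q$. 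To $\{v_n\}$ I associate the concentration function $\mathfrak M_n(r)=\sup_{y}\int_{y-r}^{y+r}|v_n|^2\,dx$, extract the limit $\lambda=\lim_{r\to\infty}\mathfrak M(r)\in[0,q]$, and treat the three cases compactness ($\lambda=q$), vanishing ($\lambda=0$) and dichotomy ($0<\lambda<q$).

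In the dichotomy case I localize through cut-offs $\phi_r(\cdot-y_n)$ and $\psi_r(\cdot-y_n)$ with $\phi^2+\psi^2\equiv1$, exactly as in Lemma \ref{dicho}, and the only nontrivial contributions are the commutators $\int\phi_rv_n[D^\alpha,\phi_r]v_n\,dx$ and $\int\psi_rv_n[D^\alpha,\psi_r]v_n\,dx$. These I control with the fractional Leibniz rule of Kenig--Ponce--Vega \cite{KPV}: writing $|[D^\alpha,\phi_r]v_n|_2\lesssim |v_n|_{p_1}\,|D^\alpha\phi_r|_{p_2}$ with $\frac1{p_1}+\frac1{p_2}=\frac12$ and using $|D^\alpha\phi_r|_{p_2}=r^{-\alpha+\frac1{p_2}}|D^\alpha\phi|_{p_2}$, one obtains a decaying bound as soon as $\frac1{p_1}>\frac12-\alpha$. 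For $\alpha>\frac12$ the choice $p_1=p_2=4$ of Lemma \ref{dicho} is admissible and gives $\mathcal O(r^{\frac14-\alpha})$; for $\frac13<\alpha<\frac12$, where $H^{\frac\alpha2}\not\hookrightarrow L^4$, I instead take $p_1$ at (or just below) the Sobolev endpoint $\frac{2}{1-\alpha}$, so that $\frac1{p_2}=\frac\alpha2$ and the commutator is $\mathcal O(r^{-\frac\alpha2})\to0$. Hence $F(g_n)+F(h_n)\leq F(v_n)+\mathcal O(r^{-\frac\alpha2})+\mathcal O(\epsilon)$ in the whole range $\alpha>\frac13$, which upon letting $r\to\infty$ and then $\epsilon\to0$ yields $I_q\geq I_\lambda+I_{q-\lambda}$ and contradicts strict subadditivity, so dichotomy never occurs.

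The step I expect to be the main obstacle, and the one that separates part~1 from part~2, is to determine the range of $q$ for which vanishing is excluded and $I_q$ is strictly subadditive. From $N(v_n)=q$ one has $\int v_n^2=2q-\frac13\int v_n^3$, whence $F(v_n)-2q=\int|D^{\frac\alpha2}v_n|^2-\frac13\int v_n^3\to I_q-2q$; thus if $I_q<2q$ then $\liminf\int v_n^3\geq 3(2q-I_q)>0$, so $|v_n|_3\geq\delta$ for large $n$ and Lemma \ref{tech} excludes vanishing exactly as in Lemma \ref{vanish}. The strict inequality $I_q<2q$ reduces to producing a trial function with $N(u)=q$ and $\int|D^{\frac\alpha2}u|^2<\frac13\int u^3$; testing with $u=A\,w(\cdot/\lambda)$ for a fixed positive profile $w$ and optimizing in the amplitude $A$ and width $\lambda$ under $N(u)=q$, a scaling computation gives such a function for every $q>0$ when $\alpha>\frac12$ (the $L^2$ subcritical case), whereas when $\frac13<\alpha<\frac12$ (the $L^2$ supercritical case) the two terms of $N$ scale oppositely and the inequality, together with strict subadditivity, can be met only once the mass exceeds a threshold $q_0=q_0(\alpha)$. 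This is precisely the origin of the restriction $q>q_0$ in statement~2.

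Once $\lambda=q$ is secured in the appropriate range, the compactness alternative provides, after translation, a subsequence of $\{v_n\}$ converging in $H^{\frac\alpha2}(\R)$ to a minimizer, as in Lemma \ref{compact}, so that $G_q$ is nonempty; the Euler--Lagrange equation then identifies its elements, up to rescaling, with solitary waves solving \eqref{SWfBBM} with positive velocity. The orbital stability of $G_q$ finally follows by the standard contradiction argument of Theorem \ref{main}, using that both $F$ and $N$ are conserved along the fBBM flow.
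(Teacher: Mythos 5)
Your proposal is correct in outline and is essentially the approach the paper intends: the paper's own ``proof'' of Theorem \ref{Zeng} is the single sentence that Zeng's concentration--compactness argument \cite{Z} for $\alpha\ge1$ ``extends without any noticeable change'', and what you write out --- boundedness of minimizing sequences because $F$ is the squared $H^{\frac\alpha2}$ norm, the commutator estimate via the Kenig--Ponce--Vega Leibniz rule \cite{KPV} (with the exponent shift to $p_1=\frac{2}{1-\alpha}$ when $H^{\frac\alpha2}\not\hookrightarrow L^4$), and the identification of $I_q<2q$ as the condition producing the threshold $q_0$ --- is precisely the content of that extension, in more detail than the paper provides.

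The one step you invoke without justification is the strict subadditivity $I_q<I_\lambda+I_{q-\lambda}$, and here you cannot copy Lemma \ref{subadd}: since $N$ mixes quadratic and cubic terms, $F$ and $N$ admit no common scaling and $I_q$ is not homogeneous in $q$, so the fKdV homogeneity argument is unavailable. The missing argument can, however, be run from the very inequality $I_q<2q$ you already establish. If $\{v_n\}$ is minimizing and $\int|D^{\frac\alpha2}v_n|^2\to0$, then \eqref{GN} forces $\int v_n^3\to0$, hence $\int v_n^2\to 2q$ and $I_q=2q$; thus $I_q<2q$ gives $\liminf_n\int|D^{\frac\alpha2}v_n|^2>0$. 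The dilation $u\mapsto u(\cdot/\theta)$ multiplies $N$ by $\theta$ while $F(u(\cdot/\theta))=\theta\int u^2+\theta^{1-\alpha}\int|D^{\frac\alpha2}u|^2$, so this positive lower bound yields the strict sub-homogeneity $I_{\theta q'}<\theta I_{q'}$ for every $\theta>1$ at every mass $q'$ with $I_{q'}<2q'$; combined with the trivial bound $I_{q'}\le 2q'$ (spreading trial functions), this gives $I_q<I_\lambda+I_{q-\lambda}$ for all $0<\lambda<q$ whenever $I_q<2q$, treating separately the degenerate case where one of the two pieces satisfies $I_\lambda=2\lambda$ --- which is exactly where the supercritical restriction $q>q_0$ bites. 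With this inserted your argument is complete and, like the paper's, conditional on local well-posedness in a space where both $F$ and $N$ are conserved.
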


\begin{remark}
1. Again, the orbital stability results in Theorem \ref{Zeng} are conditional ones. A complete one would necessitate to prove a {\it global} well-posedness for the Cauchy problem associated to \eqref{fBBM}, when $\alpha>1/3.$ Due to the invariance of the $H^{\frac\alpha2}(\mathbb R)$ norm, it would be sufficient to get a  {\it local} well-posedness result in the same space. We recall that the best known result so far is given in \cite{LPS2} where local well -posedness  is proven for initial data in $H^s(\R), s>\frac{3}{2}-\alpha.$

Note that  the conservation of $E(u)$ implies by standard compacteness methods the global existence of weak solutions in $H^{\frac\alpha2}(\mathbb R),$ without uniqueness.

It is worth noticing that the numerical simulations in \cite{KS} suggest that a finite type blow-up may occur when $0<\alpha\leq \frac{1}{3}$ but not when $\alpha >\frac{1}{3}.$

2. In the case of the generalized BBM equation \eqref{gBBM}, the critical value $q_0$ is associated to a critical velocity for the solitary waves, \lq\lq fast\rq\rq \, solitary waves are stable (see more details below). This fact relies strongly on the {\it explicit} formulas for the solitary waves. No such link seems to be known for fractional BBM equations.

\end{remark}

As noticed in \cite{BMcR} for the generalized BBM equation
\begin{equation}\label{gBBM}
u_t+u_x+u^pu_x-u_{xxt}=0,
\end{equation}
the stability theory of solitary waves is \lq\lq a little more complex\rq\rq \, than for the corresponding generalized KdV equation \eqref{gKdV} for which any solitary wave of {\it arbitrary} positive velocity is unstable when $p\geq 4.$

In fact (see \cite{SS}) solitary waves of \eqref{gBBM} of arbitrary positive velocity are stable when $p<4$ but when $p\geq 4$ there exists $c*=c*(p)$
such that the solitary waves of velocity $c<c*$ are unstable while those of velocity $c>c*$ are stable.

Furthermore the mechanism of instability is different since the Cauchy problem for gBBM is globally well posed in $H^1(\R)$ for any $p.$ The numerical simulations in \cite{BMcR} suggest that an unstable solitary wave will jump to a stable, faster one. No rigorous proof of this fact exists to our knowledge.

 Instability results for generalized  fBBM type equations are provided in \cite{SS} when $\alpha\geq 1$ in our notations. The proof does not extend easily to the case $\alpha<1$ (they use properties of the multiplier $
m(\xi)=1+|\xi|^\alpha$ that are no more valid when $\alpha<1).$ 

\section{Remarks on the KP case}

We  consider  now briefly the KP I version of \eqref{fKdV}, that is
\begin{equation} \label{fKPI}
u_t+uu_x-D_x^\alpha u_x+\epsilon\partial_x^{-1} u_{yy}=0,\quad \text{in}\; \R^2\times \R_+,\quad u(\cdot,0)=u_0,
\end{equation}
where $\epsilon =1$ corresponds to the fKP II equation and $\epsilon =-1$ to the fKP I equation. Here $D^{\alpha}_x$ denotes the Riesz potential of order $-\alpha$ in the $x$ direction, \textit{i.e.} $D^{\alpha}_x$ is defined via Fourier transform by $\big(D^{\alpha}_xf\big)^{\wedge}(\xi,\eta)=|\xi|^{\alpha}\widehat{f}(\xi,\eta)$.

In addition to the $L^2$ norm, \eqref{fKPI} conserves formally the energy (Hamiltonian)

\begin{equation}\label{HamfKP}
H_\alpha(u)=\int_{\R^2} (\frac{1}{2}|D_x^{\frac\alpha2} u|^2-\epsilon\frac{1}{2}|\partial_x^{-1}u_y|^2-\frac{1}{6} u^3).
\end{equation}
The corresponding energy space is 
$$Y_\alpha= \lbrace u\in L^2(\R^2) \ :  \ D^{\frac\alpha2}_x u, \ \partial_x^{-1}u_y\in L^2(\R^2)\rbrace.$$

The first question is to which values of $\alpha$ correspond to the $L^2$ and the energy critical cases? 

For the generalized KP-I equations 
\begin{equation}\label{gKPI}
u_t+u^pu_x+u_{xxx}-\partial_x^{-1}u_{yy}=0,
 \end{equation} 
 the corresponding  values of $p$ are respectively $p=4/3$ and $p=4$ (see \cite{deBS, deBS1,deBS2}). 
 
 One checks readily that the transformation
 $$u_\lambda(x,y,t)=\lambda^\alpha u(\lambda x, \lambda ^{\frac{\alpha+2}{2}} y, \lambda^{\alpha+1} t)$$ leaves \eqref{fKPI} invariant. 
 
 Moreover, $|u_\lambda|_2=\lambda^{\frac{3\alpha -4}{4}}|u|_2$, so that $\alpha=\frac{4}{3}$ is the $L^2$ critical exponent.

The energy critical value of $\alpha$ is obviously related to the non existence of localized solitary waves. One has :

\begin{proposition}\label{nonex}
Assume that $0<\alpha\leq \frac{4}{5} $ when $\epsilon =-1$ or that $\alpha$ is arbitrary when $\epsilon =1.$Then \eqref{fKPI} does not possess non trivial  solitary waves in the space $Y_\alpha \cap L^3(\R^2).$
\end{proposition}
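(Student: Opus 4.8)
The plan is to establish Pohozaev-type identities for solitary waves of \eqref{fKPI} and to combine them to force the vanishing of all the relevant integrals in the indicated range of $\alpha$. A solitary wave with velocity $c>0$ is a solution $\phi\in Y_\alpha\cap L^3(\R^2)$ of the stationary equation
\begin{equation} \label{SWfKP}
c\phi+D_x^\alpha \phi-\epsilon \partial_x^{-2}\phi_{yy}-\tfrac12 \phi^2=0 \, .
\end{equation}
First I would record three independent identities obtained by pairing \eqref{SWfKP} against the multipliers $\phi$, $x\phi_x$ and $y\phi_y$ (equivalently, by exploiting the two independent scalings $x\mapsto \mu x$ and $y\mapsto \nu y$ of the action functional). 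The crucial ingredient is the fractional analogue of \eqref{Poh}, namely
\begin{displaymath}
\int_{\R^2}(D_x^\alpha \phi)\,x\phi_x\,dxdy=\frac{\alpha-1}{2}\int_{\R^2}|D_x^{\frac\alpha2}\phi|^2\,dxdy \, ,
\end{displaymath}
which is the two-dimensional version of Lemma 3 in \cite{KMR} (the $y$ variable being a mere parameter), together with the elementary scaling identities for the $L^2$ term, the anti-derivative term $\int |\partial_x^{-1}\phi_y|^2$, and the cubic term $\int\phi^3$ under the separate dilations in $x$ and in $y$.

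The key steps, in order: (i) derive the ``energy'' identity by multiplying \eqref{SWfKP} by $\phi$ and integrating, obtaining a linear relation among $A=\int|D_x^{\frac\alpha2}\phi|^2$, $B=\int|\partial_x^{-1}\phi_y|^2$, $C=c\int\phi^2$ and $\int\phi^3$; (ii) derive the $x$-Pohozaev identity from the $x$-dilation, where the fractional multiplier produces the factor $\tfrac{\alpha-1}{2}A$ in place of a clean integer power; (iii) derive the $y$-Pohozaev identity from the $y$-dilation, which is local and therefore straightforward. Eliminating $\int\phi^3$ and $C$ among these three relations yields a single equation of the form $aA+bB=0$ with explicit rational coefficients $a,b$ depending on $\alpha$ and $\epsilon$. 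The content of the proposition is that for $\epsilon=-1$ and $0<\alpha\le\frac45$ both coefficients are nonnegative (and not both zero), while the terms $A$ and $B$ are manifestly nonnegative; hence $A=B=0$, which via \eqref{SWfKP} forces $\phi\equiv0$. For $\epsilon=+1$ the sign of the $B$-term is reversed, and the same combination gives $aA+bB=0$ with $a,b\ge0$ for every $\alpha$, again forcing triviality.

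The main obstacle is analytic rather than algebraic: one must justify that each Pohozaev pairing is legitimate for a general solution merely in $Y_\alpha\cap L^3(\R^2)$, i.e.\ that the a priori weak solitary wave has enough decay and regularity for the integrations by parts and the commutator identity with $x\phi_x$ and $y\phi_y$ to make sense (the multipliers $x\phi_x$, $y\phi_y$ are unbounded, so one works with truncations $\chi(x/R)\,x\phi_x$ and passes to the limit $R\to\infty$, controlling the error terms by the finiteness of $A$, $B$ and $\int\phi^3$). I would handle this by the standard regularization-and-cutoff scheme, establishing the identities first for smooth localized $\phi$ and then extending by density, exactly as the Pohozaev identities \eqref{EnergyIdentity}--\eqref{PohojaevIdentity} are justified in \cite{LPS2}. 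Once the identities hold in the required generality, the determination of the threshold $\alpha=\frac45$ for $\epsilon=-1$ is a direct inspection of the sign of the coefficients, and the case $\epsilon=+1$ follows immediately since the sign change of the $B$-term only reinforces the conclusion.
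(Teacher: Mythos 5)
Your overall strategy is the same as the paper's: derive the energy identity together with the two anisotropic Pohozaev identities (the paper obtains them by multiplying the equation, written in the form \eqref{BBMBsq}, by $u$, $xu$ and $yv$; this spans exactly the same three linear relations among $A=\int|D_x^{\frac\alpha2}\phi|^2$, $B=\int|\partial_x^{-1}\phi_y|^2$, $C=c\int\phi^2$ and $D=\int\phi^3$ as your multipliers $\phi$, $x\phi_x$, $y\phi_y$), and the fractional identity \eqref{Poh} applied in $x$ with $y$ as a parameter is precisely the ingredient used there. Your remarks on justifying the pairings by truncation also match the paper, which invokes a truncation in space and in low frequencies as in \cite{Mol}.

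The gap is in the final algebra for $\epsilon=-1$. The three identities form a three-dimensional space of linear relations among the four quantities $A,B,C,D$, so the subspace of combinations that annihilate both $C$ and $D$ is one-dimensional; it is spanned by $\tfrac{\alpha}{2}A+2\epsilon B=0$, which is the paper's identity \eqref{defoc}. For $\epsilon=+1$ this does force $A=B=0$ for every $\alpha$, as you claim. But for $\epsilon=-1$ the unique relation obtainable by eliminating $D$ and $C$ reads $\tfrac{\alpha}{2}A=2B$: its coefficients have opposite signs, it carries no threshold in $\alpha$, and it is perfectly compatible with nontrivial solutions. No combination of your three identities can yield $aA+bB=0$ with $a,b\ge0$ not both zero in the focusing case, so the step ``direct inspection of the sign of the coefficients'' cannot produce the value $\tfrac45$. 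The correct elimination must retain the mass term: eliminating $D$ and $B$ instead gives $\int\big(c\phi^2+\tfrac{4-5\alpha}{4}|D_x^{\frac\alpha2}\phi|^2\big)=0$ (the paper's \eqref{po7}), and it is the positivity of $c\int\phi^2$ for $c>0$ together with $\tfrac{4-5\alpha}{4}\ge0$ for $\alpha\le\tfrac45$ that forces triviality. You should redo the last step keeping $C$ in play (and note that the conclusion in the focusing case uses the sign of the velocity).
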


\begin{proof}
It is handy to write \eqref{fKPI} as
\begin{equation} \label{BBMBsq}
\left\{ \begin{array}{l}   -cu_x+uu_x-D_x^\alpha u_x+\epsilon v_y=0 \\
v_x=u_y,
\end{array} \right.
\end{equation}

Adapting the method in \cite{deBS}, we multiply successively the first equation by $xu$ and $yv.$ After some integrations by parts (which can be justified by a standard truncation in space procedure and a truncation of low frequencies as in \cite{Mol}) one obtains the two identities:

\begin{equation}\label{po1}
\int_{\R^2} \left(\frac{c}{2}u^2-\frac{1}{3}u^3+\epsilon \frac{1}{2}v^2+\frac{\alpha +1}{2}|D_x^{\frac\alpha2}u|^2\right)=0,
\end{equation}

\begin{equation}\label{po2}
\int_{\R^2} \left(-\frac{c}{2}u^2+\frac{1}{6}u^3-\epsilon \frac{1}{2}v^2-\frac{1}{2}|D_x^{\frac\alpha2}u|^2\right)=0.
\end{equation}

On the other hand, the energy identity yields

\begin{equation}\label{energ}
\int_{\R^2} \left(-cu^2+\frac{1}{2}u^3+\epsilon v^2-|D_x^{\frac\alpha2}u|^2\right)=0.
\end{equation}

Substracting \eqref{po2} from \eqref{po1} the cubic term from \eqref{po1} yields

\begin{equation}
\int_{\R^2}\left(cu^2-\frac{1}{2} u^3+\epsilon v^2+\frac{\alpha +2}{2}|D_x^{\frac\alpha2}u|^2\right)=0,
\end{equation}
and adding with \eqref{energ} we obtain
\begin{equation}\label{defoc}
\int_{\R^2}\left(2\epsilon v^2+\frac{\alpha}{2}|D_x^{\frac\alpha2}u|^2\right)=0,
\end{equation}
proving that no solitary wave exists, whatever $\alpha$ in the defocusing case $\epsilon =1.$

In the focusing, fKP I, case $\epsilon=-1,$ we use \eqref{defoc} successively in \eqref{po1} and \eqref{energ} to get 
\begin{equation}\label{po3}
\int_{\R^2}\left(\frac{c}{2} u^2-\frac{1}{3}u^3+ \frac{3\alpha+4}{2\alpha}v^2\right)=0,
\end{equation}
and
\begin{equation}\label{po4}
\int_{\R^2}\left(-c u^2+\frac{1}{2}u^3- \frac{\alpha+4}{\alpha}v^2\right)=0.
\end{equation}

Eliminating $v$ we obtain
\begin{equation}\label{po5}
\int_{\R^2}\left(c\alpha  u^2+\frac{4-5\alpha}{12}u^3\right)=0.
\end{equation}

On the other hand, adding \eqref{po1} and \eqref{po2} yields
\begin{equation}\label{po6}
\frac{1}{3}\int_{\R^2}u^3=\alpha\int_{\R^2}|D_x^{\frac\alpha2}u|^2,
\end{equation}
which with \eqref{po5} implies
\begin{equation}\label{po7}
\int_{\R^2}\left(cu^2+\frac{4-5\alpha}{4}|D_x^{\frac\alpha2}u|^2\right)=0,
\end{equation}
which proves that no solitary waves exist in this case when $\alpha\leq \frac{4}{5}.$
\end{proof}

To go further it might be useful to consider the situation for the generalized KPI equation \eqref{gKPI}. Existence of solitary waves is established  in \cite{deBS}  in the energy subcritical case $1\leq p<4,$ by solving the variational problem $I_\lambda$ consisting in minimizing the energy norm with the constraint

$$\int_{\R^2} u^{p+2}=\lambda.$$

To define the notion of {\it ground state} for \eqref{gKPI}, we introduce  the energy

$$E_{KP}(\psi) = \frac{1}{2} \int_{\R^2} (\partial_x \psi)^2 + \frac{1}{2} \int_{\R^2} (\partial_x^{-1}\partial_y \psi)^2 - \frac{1}{2(p+2)} \int_{\R^2} \psi^{p+2},$$
and we define the action

$$S(N) = E_{KP}(N) + \frac{c}{2} \int_{\R^2} N^2.$$

We term {\it ground state}, a solitary wave $N$ which minimizes the action $S$ among all finite energy non-constant solitary waves of speed $c$ of \eqref{gKPI} (see \cite{deBS} for more details). It is proven in \cite{deBS} that when $1\leq p<4$, the solutions of the minimization problem $I_\lambda$ are ground states. Moreover (see \cite{deBS2}) , when $1\leq p<\frac{4}{3},$ the ground states are minimizers of the Hamiltonian  $E_{KP}$ with prescribed mass ($L^2$ norm). This implies (by an argument {\it \`a la Cazenave-Lions})  the orbital stability of the set of ground states (see also \cite{LW}). The {\it uniqueness}, up to the trivial symmetries of the ground states is a challenging open question.\footnote{The stability result in \cite{deBS2} is a {\it conditional one} when $p\neq 1$ by lack of the {\it global} well-posedness of the corresponding Cauchy problem. Recall that the Cauchy problem for the KPI equation itself ($p=1$) is globally well-posed in appropriate spaces!
 , including the energy space (see \cite{MST2,IKT}).} It is furthermotre proven in \cite{deBS2} that any ground state (and in fact any cylindrically symmetric solitary wave) is unstable when $p>\frac{4}{3}.$

 The instability result was improved by Liu \cite{L1} who used invariant sets of the generalized KP  I flow together with the virial argument above to prove the existence of initial data leading to blow-up in finite time of $|u_y(.,t)|_{2}$ when $p\geq \frac{4}{3}$. This leads to  a strong instability result (by finite time blow-up of $|u_y(.,t)|_{2}$ ) of the solitary waves when $2<p<4.$
 
 \medskip

In order to  check to what extent the above results could be extended to the fKP equation one has as a  first step to establish   a fractionary Gagliardo-Nirenberg inequality that allows for the Sobolev embedding of the energy space $Y_\alpha$ into $L^p(\R^2), \, p\leq3.$ 

The following inequality is a special case of    Lemma 2.1  in \cite{BLT} which considers only  $1\leq \alpha\leq 2$ but a close inspection at the proof reveals that it is still valid when $\frac{4}{5}<\alpha <1.$

\begin{lemma}\label{BLT}
Let $\frac{4}{5}<\alpha < 1.$ For any $f\in Y_\alpha$ one has 

$$|f|_3^3\leq c|f|_2^{\frac{5\alpha-4}{\alpha+2}}\|f\|_{H^{\frac\alpha2}_x}^{\frac{18-5\alpha}{2(\alpha+2)}}|\partial_x^{-1}f_y|_2^{\frac12} \, ,$$
where $\|\cdot\|_{H^{\frac{\alpha}2}_x}$ denotes the natural norm on the space $$H_x^{\frac\alpha2}(\R^2)=\lbrace f\in L^2(\R^2) \ : \  D^{\frac\alpha2}_xf \in L^2(\R^2)\rbrace.$$
\end{lemma}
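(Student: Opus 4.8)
The statement to prove is Lemma \ref{BLT}, a fractional Gagliardo--Nirenberg inequality for the fKP energy space $Y_\alpha$ in the range $\tfrac45<\alpha<1$, of the form
\[
|f|_3^3\leq c\,|f|_2^{\frac{5\alpha-4}{\alpha+2}}\,\|f\|_{H^{\frac\alpha2}_x}^{\frac{18-5\alpha}{2(\alpha+2)}}\,|\partial_x^{-1}f_y|_2^{\frac12}.
\]

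The plan is to follow the anisotropic interpolation strategy of Lemma 2.1 in \cite{BLT}, checking at each step that the argument survives lowering $\alpha$ below $1$. First I would reduce to an anisotropic Sobolev embedding on $\R^2$ where the $x$ and $y$ directions are treated with different numbers of derivatives: the $L^2$-based regularity in $x$ is $\tfrac\alpha2$ (controlled by $\|f\|_{H^{\frac\alpha2}_x}$), while the $y$ direction is controlled only through the weaker quantity $|\partial_x^{-1}f_y|_2$. The natural tool is to split the frequency variable $(\xi,\eta)$ and bound $|f|_3$ by a product of mixed-norm Lebesgue norms via Sobolev embedding in one variable followed by interpolation in the other, arranging the exponents so that the three factors $|f|_2$, $\|f\|_{H^{\frac\alpha2}_x}$, $|\partial_x^{-1}f_y|_2$ appear with exactly the powers dictated by scaling. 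The scaling check is the cleanest sanity test: under $u_\lambda$ defined in the excerpt one verifies that the stated exponents $\tfrac{5\alpha-4}{\alpha+2}$, $\tfrac{18-5\alpha}{2(\alpha+2)}$, and $\tfrac12$ are homogeneous of the correct degree, and this is precisely where the constraint $\alpha>\tfrac45$ first appears (so that $5\alpha-4>0$ and the $|f|_2$ exponent is admissible).

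Concretely, I would carry out the steps in this order. First, establish a one-dimensional (in $x$) Gagliardo--Nirenberg/Sobolev inequality of the form $|g|_{L^{p}_x}\lesssim |g|_{L^2_x}^{1-\theta}\,|D_x^{\frac\alpha2}g|_{L^2_x}^{\theta}$ for a suitable $p$ and $\theta=\theta(\alpha,p)$, valid for $0<\alpha<1$ exactly as in the homogeneous case; second, handle the $y$ direction by writing $f$ in terms of $\partial_x^{-1}f_y$, i.e. recovering $y$-regularity from the anisotropic derivative, again via a one-variable interpolation; third, combine the two by Hölder's inequality in mixed norms with exponents tuned to land on $L^3(\R^2)$. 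The exponents $\tfrac{5\alpha-4}{\alpha+2}+\tfrac{18-5\alpha}{2(\alpha+2)}+\tfrac12=3$ should sum correctly to match the homogeneity of $|f|_3^3$, which I would verify as a consistency check before committing to the interpolation weights.

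The main obstacle is verifying that the proof in \cite{BLT}, written for $1\le\alpha\le2$, genuinely extends to $\tfrac45<\alpha<1$ rather than silently using $\alpha\ge1$. The two places I would scrutinize are: (i) the fractional one-dimensional Sobolev inequality for $D_x^{\alpha/2}$, which requires $\tfrac\alpha2$ derivatives to control the target $L^p_x$ norm and hence imposes a lower bound on $\alpha$ via the embedding exponent; and (ii) the positivity and admissibility of all interpolation exponents, where the threshold $\alpha=\tfrac45$ emerges as the borderline making $5\alpha-4\ge0$. Since the paper only claims the inequality holds and attributes the detailed estimate to \cite{BLT}, I expect the write-up to amount to identifying these two conditions, confirming that both reduce to $\alpha>\tfrac45$, and noting that no step in the \cite{BLT} argument actually required $\alpha\ge1$ — the only essential hypotheses being $\alpha<1$ (for the anisotropic scaling to be non-degenerate) and $\alpha>\tfrac45$ (for the exponents to be positive and the embedding to close).
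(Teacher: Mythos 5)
Your proposal coincides with the paper's treatment: the paper offers no proof beyond citing Lemma 2.1 of \cite{BLT} and asserting that ``a close inspection at the proof reveals that it is still valid when $\frac45<\alpha<1$,'' which is exactly the programme you describe (inspect the anisotropic interpolation argument of \cite{BLT}, check all exponents remain admissible, and locate the threshold $\alpha>\frac45$ in the positivity of $5\alpha-4$). One small correction to your sanity check: since the right-hand side uses the inhomogeneous norm $\|f\|_{H^{\frac\alpha2}_x}$, the inequality is not dilation-homogeneous under the KP scaling $u_\lambda$ for $\alpha<2$, so the only consistency check available at that level is the amplitude one, namely that $\frac{5\alpha-4}{\alpha+2}+\frac{18-5\alpha}{2(\alpha+2)}+\frac12=3$, which does hold.
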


Lemma \ref{BLT} implies obviously the embedding $Y_\alpha \hookrightarrow L^3(\R^2)$ if $\frac{4}{5}<\alpha<1$
and is the starting point for an existence theory of solitary waves to fKPI equations  which will be developed elsewhere \cite{LPS3}. Note that some results for the case $\alpha=1$ (the KPI-Benjamin-Ono equation) are given in \cite{Esf, PS}.
\vspace{0.5cm}

\begin{remark}
Concerning the Cauchy problem for fKPI, one could conjecture a {finite time  blow-up of $|u_y|_0$} when $\frac{4}{5}<\alpha <\frac{4}{3}$ as Liu proved for the gKPI, explaining for instance the (expected) instability of  KPI-BO ground states. 
We refer to a subsequent work \cite{LPS3}  for a study of global {\it weak solutions} to fKP equations.
\end{remark}



\vspace{0.3cm}

\vspace{0.3cm}

 \section{Final remarks}


As already noticed, the precise description of the (expected) instability mechanism  of the solitary waves of \eqref{fKdV} when $\frac{1}{3}<\alpha\leq\frac{1}{2}$ seems out of reach for the moment. According to the numerical simulations in \cite{KS}, the instability seems to be due to blow-up. Recall that this issue is still open for the generalized KdV equation  (that is \eqref{GfKdV} with $\alpha=2$) when $p>4,$ the critical case $p=4$ being treated in \cite {MM}. 

Similarly, the description of the (expected) instability of slow solitary waves of the fBBM equation when $\frac{1}{3}<\alpha\leq\frac{1}{2}$  is not known. Recall that a corresponding rigorous description of solitary waves of the gBBM when $p>4$ is still an open problem.

 On the other hand, the computations in \cite{KS2} seem to indicate that the {\it soliton resolution conjecture} (see \cite{Tao}) is true for both the fKdV and fBBM equations in the stable range $\frac{1}{2}<\alpha\leq 1.$



\begin{merci}
The Authors were partially  supported by the Brazilian-French program in mathematics. J.-C. S. acknowledges support from the project ANR-GEODISP of the Agence Nationale de la Recherche. F.L and D.P. were partially supported by CNPq and FAPERJ/Brazil.
\end{merci}
\bibliographystyle{amsplain}

\end{document}